\numberwithin{equation}{section}
\numberwithin{figure}{section}
\theoremstyle{plain}
\newtheorem{theorem}{Theorem}[section]
\newtheorem{lemma}[theorem]{Lemma}
\theoremstyle{plain}
\theoremstyle{remark}
\newtheorem{remarks}{Remarks}
\DeclareMathOperator{\supp}{supp}
\begin{document}

\title[two-point polynomial patterns]{two-point polynomial patterns in  subsets of positive density in $\mathbb{R}^n$}

\author{Xuezhi Chen}
\address{Institute of Applied Physics \& Computational Mathematics\\
	Beijing, 100088\\ P.R. China}
\email{xuezhi-chen@foxmail.com}

\author{Changxing Miao}
\address{Institute of Applied Physics \& Computational Mathematics\\
	Beijing, 100088\\ P.R. China}
\email{miao\_changxing@iapcm.ac.cn}


\subjclass[2020]{42B20}

\keywords{Furstenberg-S\'ark\"ozy theorem, Polynomial patterns, Euclidean setting, Gap estimate.}

\begin{abstract}
	Let $\gamma(t)=(P_1(t),\ldots,P_n(t))$ where $P_i$ is a real polynomial with zero constant term for each $1\leq i\leq n$.  We will show the existence of the configuration $\{x,x+\gamma(t)\}$ in sets of positive density $\epsilon$ in $[0,1]^n$ with a gap estimate $t\geq \delta(\epsilon)$ when $P_i$'s are arbitrary,
	and in $[0,N]^n$ with a gap estimate $t\geq \delta(\epsilon)N^n$ when $P_i$'s are of distinct degrees where $\delta(\epsilon)=\exp\left(-\exp\left(c\epsilon^{-4}\right)\right)$ and $c$ only depends on $\gamma$. To prove these two results, decay estimates of certain oscillatory integral operators and Bourgain's reduction are primarily utilised. For the first result, dimension-reducing arguments are also required to handle the linear dependency. For the second one, we will prove a stronger result instead, since then an anisotropic rescaling is allowed in the proof to eliminate the dependence of the decay estimate on $N$. And as a byproduct, using the strategy token to prove the latter case, we extend the corner-type Roth theorem previously proven by the first author and Guo.
\end{abstract}

\maketitle

\section{Introduction}\label{intro}

It was conjectured by Lov\'asz that a set $A\subset\{1,2,\ldots,N\}$ with no non-zero square differences has size $o(N)$. Furstenberg \cite{furstenberg77} proved this conjecture using ergodic theory. Almost at the same time, S\'ark\"ozy \cite{sarkozy78} obtained a stronger result for such an $A$ with a quantitative bound 
\begin{equation}\label{s-bound}
	|A|=O\left(\frac{N(\log\log N)^{2/3}}{(\log N)^{1/3}}\right)=o(N)
\end{equation}  
using the Hardy-Littlewood circle method  elaborated by Roth \cite{roth53}. We refer this result as  the Furstenberg-S\'ark\"ozy theorem. Pintz, Steiger and Szemer\'edi \cite{pss88} improved the bound \eqref{s-bound} and 
recently Bloom and Maynard \cite{bm22} got the best upper bound so far, saying
\begin{equation}\label{bm-bound}
	|A|=O\left(\frac{N}{(\log N)^{c\log\log\log N}}\right)
\end{equation}
for some absolute constant $c>0$.

Besides strengthening the upper bound, it is also an interesting problem to extend the square differences to  more general differences. For studies in this area, one can refer to S\'ark\"ozy \cite{sarkozy78-2}, Balog, Pelik\'an, Pintz and Szemer\'edi \cite{bpps94}, Rice \cite{rice19}, etc. By combining the methods in \cite{rice19} and \cite{bm22}, Arala \cite{arala23} gained the Bloom-Maynard bound \eqref{bm-bound} for arbitrary intersective polynomial difference\footnote{A polynomial $P(t)\in\mathbb{Z}[t]$ is called intersective if for any positive $q$, the  congruence $P(n)\equiv0 \,(\textrm{mod}\,q)$ has an integer solution.} with the constant $c$ only depending on the degree of the given polynomial  .

Note that for a set $A\subset\{1,2,\ldots,N\}$, considering whether $A$ has non-zero square differences, i.e. $(A-A)\cap (\{t^2:t\in\mathbb{Z}\}\setminus\{0\})\neq\emptyset$, is equivalent to searching for the patterns $\{x,x+t^2\}$ in $A$ with $t\neq 0$. The Furstenberg-S\'ark\"ozy theorem can be viewed as a special case (i.e. of length 2) of the polynomial Szemer\'edi theorem proven by Bergelson and Leibman \cite{bl96} using ergodic theory. However, using their result, one can only get the $o(N)$ bound as in \cite{furstenberg77} for the special pattern $\{x,x+t^2\}$. Peluse and Prendiville \cite{pp22} derived that a subset of $\{1,2,\ldots,N\}$ lacking of nontrivial triples $\{x,x+t,x+t^2\}$  has size
\begin{equation}\label{pp-bound}
	O\left(\frac{N}{(\log N)^c}\right)
\end{equation}
with $c=2^{-150}$, 
which is the best bound at present but still worse than the bound in \eqref{bm-bound}. It is natural that for the shorter pattern $\{x,x+t^2\}$, Bloom and Maynard \cite{bm22} were able to obtain a better upper bound than \eqref{pp-bound}.

Kuca, Orponen and Sahlsten \cite{kos23} considered a continuous analogue of this problem for sets of fractional dimension. They proved that there exists an absolute constant $\epsilon>0$ such that if $K\subset\mathbb{R}^2$ is a compact set with Hausdorff dimension $\dim_{\mathcal{H}}K\geq 2-\epsilon$, then there exist $x\in K$ and $z\neq 0$ such that $x+(z,z^2)\in K$.  Bruce and Pramanik \cite{bp23} extended this result to more general curves in higher dimensions. 

In the Euclidean setting, let $\gamma:\mathbb{R}\rightarrow\mathbb{R}^n$ be a continuous function with  $\gamma(0)=0$. Assume that $E\subset\mathbb{R}^n$ has positive Lebesgue measure. It is trivial to show that $E$ contains the pattern $\{x,x+\gamma(t)\}$ for some $t\neq 0$ or  more general finite patterns. Indeed, we have
\begin{equation*}
	\int_{\mathbb{R}^n}\!\textbf{1}_E(x)\textbf{1}_E(x+\gamma(t))\,\mathrm{d}x>0
\end{equation*}
holds for all sufficiently small $t$ by the continuity. Bourgain \cite{bou88} consider the existence of the patterns $\{x,x+t,x+t^d\}$ for fixed $d\geq 2$ with an explicit gap estimate of $t$ in the positive density subset of $[0,N]$. Later Durcik, Guo and Roos \cite{dgr19} and the first author, Guo and Li \cite{cgl21} extended Bourgain's result to general polynomial patterns and recently Krause, Mirek, Peluse and Wright \cite{kmpw22} studied the polynomial Szemer\'edi-type problem in topological fields. For higher dimensions, Christ, Durcik and Roos \cite{cdr21}, and the first author and Guo \cite{cg23} studied the corner-type configurations of the form $\{(x,y),(x+P_1(t),y),(x,y+P_2(t))\}$ in subsets of $[0,1]^2$ in the plane. And Durcik,  Kova\v{c} and Stip\v{c}i\'c \cite{dks23} proved the existence of $\{(x,y),(x+t,y+at^{\beta})\}$ in a positive measure subset in the plane with  $a$  in a whole interval $I$ and a uniform point $(x,y)$ for positive $\beta\neq 1$. 

In this paper, inspired by the comments in \cite{dks23}, we first consider a continuous variant of the Furstenberg-S\'ark\"ozy theorem in the unit cube of $\mathbb{R}^n$ for general polynomial curves. Throughout this paper, for each $1\leq i\leq n $, we let $P_i:\mathbb{R}\rightarrow\mathbb{R}$ be a real polynomial with zero constant term, denoted by
\begin{equation}\label{polys}
	P_i(t)=\sum_{\sigma_i\leq \beta\leq d_i} a_{i,\beta}t^{\beta},
\end{equation}
where $a_{i,\sigma_i}, a_{i,d_i}$ are nonzero, $1\leq \sigma_i\leq d_i$. We also let $\gamma: I\rightarrow \mathbb{R}^n$ be a polynomial curve defined by
\begin{equation}\label{gamma}
	\gamma(t)=(P_1(t),\ldots,P_n(t)),
\end{equation}
where $I$ is an interval with nonempty interior.
The main result of this paper can be stated as follows:
\begin{theorem}\label{thm1}
	Let $n\geq1$ be an integer, and let $P_i(t)$, $\gamma(t)$ be defined by \eqref{polys}, \eqref{gamma} respectively. Then for any $\epsilon\in (0,1/2)$, there exists a constant $c>0$ only depending on $\gamma$ such that for all $E\subset[0,1]^n$ with $|E|\geq \epsilon$, there exist 
	\[x,x+\gamma(t)\in E\]
	with $t>\delta$, where
	\begin{equation}\label{delta}
		\delta=\delta(\epsilon)=\exp\left(-\exp\left(c\epsilon^{-4}\right)\right).
	\end{equation}
	
\end{theorem}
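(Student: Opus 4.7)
I plan to follow Bourgain's Fourier-analytic framework for gap estimates, combined with a dimension-reducing step to handle the linearly dependent cases among the $P_i$. Fix a smooth nonnegative bump $\rho$ supported in $(1,2)$ with $\int\rho = 1$, and introduce the bilinear counting form
\begin{equation*}
\Lambda_r(f,g) = \int_{\mathbb{R}^n}\!\int_{\mathbb{R}} f(x)\, g(x+\gamma(rt))\, \rho(t)\, dt\, dx.
\end{equation*}
Arguing by contradiction, if $E\subset[0,1]^n$ with $|E|\geq\epsilon$ contains no pair $\{x,x+\gamma(t)\}$ with $t\geq\delta$, then $\Lambda_r(1_E,1_E)=0$ for every $r$ in a range $[\delta, r_0]$ where $r_0$ is the largest scale keeping $\gamma(rt)$ in a controlled neighbourhood of the unit cube. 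The goal reduces to producing one such $r$ at which $\Lambda_r(1_E,1_E)>0$.

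The central analytic identity comes from Plancherel:
\begin{equation*}
\Lambda_r(1_E,1_E) = \int_{\mathbb{R}^n} |\widehat{1_E}(\xi)|^2\, \overline{m_r(\xi)}\, d\xi, \qquad m_r(\xi) = \int e^{-2\pi i\, \xi\cdot\gamma(rt)}\, \rho(t)\, dt.
\end{equation*}
Van der Corput and stationary-phase estimates deliver polynomial decay of the form $|m_r(\xi)|\lesssim (1+\Phi_\gamma(r,\xi))^{-\alpha}$ for some $\alpha = \alpha(\gamma) > 0$, where $\Phi_\gamma$ is an anisotropic size function encoding the leading behaviour of $\gamma$. The strategy, following Bourgain's reduction, is to compare $\Lambda_r(1_E,1_E)$ to the model convolution $\langle 1_E, 1_E * \tilde\mu_r\rangle$, where $\tilde\mu_r$ is a smooth surrogate matching the low-frequency profile of $\mu_r$. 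Splitting the Fourier integral at a threshold determined by $\epsilon$ and $\alpha$, the low-frequency piece yields a main term of order $\epsilon^2$, while the high-frequency piece is absorbed by the decay of $m_r$ and Cauchy-Schwarz. Either $|\widehat{1_E}|^2$ lacks significant mass above the threshold, in which case $\Lambda_r(1_E,1_E)\gtrsim\epsilon^2$ and the pattern exists, or such mass is present, in which case a density-increment step produces a sub-cube on which $E$ has density bumped up by $\gtrsim \epsilon^{C}$.

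The double-exponential shape of $\delta$ arises from iterating this dichotomy: each density-increment step costs a factor $\exp(c\epsilon^{-C_1})$ in the admissible scale while gaining $\epsilon^{C_2}$ in density, and since the density cannot exceed $1$ one needs $O(\epsilon^{-C_3})$ iterations. A careful balance of these exponents, together with the sharp choice of splitting threshold at each scale, yields the quartic inner exponent $\epsilon^{-4}$. When the $P_i$ are linearly dependent, $m_r$ fails to decay in directions orthogonal to the affine span of $\gamma$, so the dimension-reducing step kicks in: I project $E$ onto this span and pass, via Fubini, to a density-typical slice of comparable density on which the curve is nondegenerate in one fewer dimension, then invoke the lower-dimensional case.

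The principal obstacle I anticipate is the interplay between dimension reduction and the density-increment iteration. One must control how the decay exponent $\alpha$ and the anisotropic size function $\Phi_\gamma$ transform under projection to an affine slice, and ensure that the at most $n$ dimension-reducing layers compose additively inside the logarithm rather than generating an $n$-fold tower. A secondary difficulty is tuning the splitting threshold in the Fourier integral uniformly across all scales and all iterations so that the inner exponent is exactly $\epsilon^{-4}$ and not a larger power.
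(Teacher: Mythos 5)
Your ingredients are partly the right ones (an oscillatory multiplier bound for $m_r$, a low/high frequency splitting against a smoothed surrogate, and a slicing device for the linearly dependent case), but the engine you propose for closing the argument --- a density-increment iteration on sub-cubes --- is a genuine gap, for three reasons. First, in the continuum you give no mechanism by which ``significant mass of $|\widehat{\mathbf{1}_E}|^2$ above the threshold'' yields a sub-cube of increased density: a large Fourier coefficient at a high frequency $\xi$ concentrates $E$ near level sets of $x\mapsto \xi\cdot x$, not on a cube, and there is no continuum analogue of Sárközy's passage to a progression with polynomial common difference. Second, even granting an increment, iterating requires rescaling the sub-cube back to $[0,1]^n$, and a general polynomial curve is not scale-invariant (each $P_i$ mixes monomials of several degrees), so the decay exponent and constants change at every step; this non-invariance is precisely why the paper's $[0,N]^n$ result (Theorem 1.2) needs the distinct-degree hypothesis and an anisotropic rescaling, and why its proof of Theorem 1.1 avoids rescaling altogether. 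Third, the bookkeeping does not produce the claimed bound: $O(\epsilon^{-C_3})$ steps, each shrinking the admissible scale by a multiplicative factor $\exp(-c\epsilon^{-C_1})$, compound to $\exp(-c\,\epsilon^{-C_1-C_3})$, a single exponential; no balancing of exponents turns this into $\exp(-\exp(c\epsilon^{-4}))$, so the stated origin of the quartic inner exponent is not correct.

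What is actually needed (and what the paper does) is Bourgain's $L^2$-pigeonhole over scales, with no increment at all: smoothing at three scales $\ell'\ll\ell\ll\ell''$ and using the multiplier bound $\lesssim 2^{\mathfrak{b}\ell}2^{-k/d}$ on each Littlewood--Paley piece gives $2^{\ell''}I+\|f*\rho_{\ell''}-f*\rho_{\ell'}\|_2\geq c\epsilon^2$. Running this along a geometrically growing sequence $\ell_{k+1}\asymp C^k\log(\epsilon^{-1})$, the alternative $\|f*\rho_{\ell_{k+1}}-f*\rho_{\ell_k}\|_2\geq c\epsilon^2/2$ can occur at most $O(\epsilon^{-4})$ times because $\sum_k\|f*\rho_{\ell_{k+1}}-f*\rho_{\ell_k}\|_2^2\leq C_\rho$ by Plancherel; hence $I\gtrsim 2^{-\ell_{K+1}}\epsilon^2$ with $K\sim\epsilon^{-4}$, and the double exponential comes from the geometric growth of the scales while the inner exponent $4$ comes from $(\epsilon^2)^2$ in this pigeonhole --- not from tuning a frequency threshold. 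Finally, in the degenerate case you should slice rather than project: intersect $E$ with a translate of the span $V$ of $\gamma$ carrying $n_0$-dimensional measure $\gtrsim\epsilon$, and then invoke the linearly independent case in dimension $n_0=\dim V$ directly; pairs in a projection of $E$ need not lift to pairs of points of $E$ differing by $\gamma(t)$, so the projection step as stated would fail.
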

\begin{remarks}
	Note that when $n=1$, the proof of  Theorem \ref{thm1} is trivial since the equation $P(t)=c\epsilon$ is solvable in $\mathbb{R}$ with $P(0)=0$ and sufficiently small $c$.  When $n=2$ and $P_1,P_2$ are linearly independent, as pointed out in \cite{kos23} and \cite{dks23}, Theorem \ref{thm1} can be derived by the corner-type Roth theorem proven in \cite{cg23}. Indeed, we have the observation that
	\begin{equation*}
		(x,y+P_2(t))=(x-P_1(t),y)+(P_1(t),P_2(t)),
	\end{equation*}
	and in \cite{cg23}, we showed the existence of triple
	$$(x,y),(x-P_1(t),y),(x,y+P_2(t)) $$
	in $E$ for some $t>\delta(\epsilon)=\exp\left(-\exp\left(c\epsilon^{-6}\right)\right)$ which is a little worse than the bound \eqref{delta}. The novelty of this paper is that we get a better bound of $\delta(\epsilon)$ for arbitrary polynomial curves in any dimension.
\end{remarks}

\begin{remarks}
	We prove the theorem for general  real polynomial curves $\gamma$ only requiring that $P_i(0)=0$ for $1\leq i\leq n $. When the $P_i$'s are linearly independent, Theorem \ref{thm1} can be obtained by Bourgain's reduction and the decay estimate of a certain oscillatory integral operator. When these polynomials are linearly dependent, the proof relies on two basic geometric observations:
	\begin{itemize}
		\item[(1)] If the polynomials $\{P_i\}_{1\leq i\leq n} $ are linearly dependent, the curve $\gamma(t)$ defined by \eqref{gamma} lies in a lower dimensional subspace;
		\item[(2)]  If $E\subset [0,1]^n$ has positive Lebesgue measure, i.e. $|E|\geq\epsilon$, and $V\subset\mathbb{R}^n$ is a linear subspace of dimension $k$,  then there is a point $x\in\mathbb{R}^n$ such that the $k$-dimensional Hausdorff measure $\mathcal{H}^k((V+x)\cap E)\gtrsim \epsilon$.
	\end{itemize}
	Then we can use the results of Theorem \ref{thm1} proven in the linear independent case and  lower dimensions. Note that the second observation may not work, and certain restrictions in the curvature of $\gamma(t)$ are crucial in the proof of \cite{kos23} and \cite{bp23}.
\end{remarks}

It is an interesting problem whether the similar results of Theorem \ref{thm1} still hold for the  subset in $[0,N]^n$ of positive density with the gap estimate $t>\delta N^{1/d}$. When $n=1$, the question is trivial since it is equivalent to searching solutions of the equation $P(t)=y-x$ where $y,x\in E$. For general $n$, we also expect a positive result of this question since there is a general theorem proven by Bergelson, Host, McCutcheon and Parreau \cite[Corollary 3.8]{bhmp00} using ergodic theory. They proved that for given $\epsilon>0$, $k,n\in\mathbb{N}$ and $P_{i,j}(t)\in\mathbb{R}[t]$ with $P_{i,j}(0)=0$, $1\leq i\leq k$, $1\leq j\leq n$, there exists a $\delta=\delta(\epsilon)>0$ such that if $E$ is a measurable subset of $[0,N]^n$ with $N\geq1$ and $|E|\geq\epsilon N^n$, then there exist $x\in\mathbb{R}^n$ and $t\in\mathbb{R}$ with $t>\delta N^{1/d}$ such that
\begin{equation*}
	\{x,x+\textbf{P}_1(t),\ldots,x+\textbf{P}_k(t)\}\subset E,
\end{equation*}
where we write $$\textbf{P}_i(t)=(P_{i,1}(t),\ldots,P_{i,n}(t))$$ and $$d=\max_{1\leq i\leq k, 1\leq j\leq n}\deg P_{i,j}.$$
However, their theorem does not give an quantitative estimate of $\delta(\epsilon)$ as in \eqref{delta}. The second result of our paper is a quantitative version of this theorem in the special case where $k=1$ and the related polynomials are of distinct degrees.

\begin{theorem}\label{thm2}
	Let $n\geq1$ be an integer, and let $P_i(t)$, $\gamma(t)$ be defined by \eqref{polys}, \eqref{gamma} respectively. If we additionally assume that the polynomials have distinct degrees and $d=\max_{1\leq i\leq n} d_i$, then for any $\epsilon\in (0,1/2)$ there exists a $\delta=\delta(\epsilon,\gamma)$ defined the same as in \eqref{delta} for some constant $c>0$ only depending on $\gamma$ such that for all $E\subset[0,N]^n$ with $|E|\geq \epsilon N^n$, there exist 
	\[x,x+\gamma(t)\in E\]
	with $t>\delta N^{1/d}$.
	
\end{theorem}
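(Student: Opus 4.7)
The plan is to reduce Theorem~\ref{thm2} to a stronger, anisotropic analogue of Theorem~\ref{thm1} whose constants are independent of the size of the ambient box, and then to prove this analogue via the same Bourgain-type scheme used for Theorem~\ref{thm1}, with a version of the oscillatory-integral decay that is uniform in the lower-order coefficients of the rescaled curve.

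After relabelling I may assume $d_1 < d_2 < \cdots < d_n = d$. Setting $y_i := x_i/N^{d_i/d}$ and $s := t/N^{1/d}$, the set $\tilde E := \{(x_i/N^{d_i/d})_{i=1}^n : x \in E\}$ lies in the anisotropic box $B_N := \prod_{i=1}^n [0, N^{1-d_i/d}]$ with $|\tilde E|/|B_N| = |E|/N^n \geq \epsilon$, and the pattern $\{x, x+\gamma(t)\} \subset E$ becomes $\{y, y+\tilde\gamma(s)\} \subset \tilde E$, where
\[
\tilde P_i(s) \;=\; N^{-d_i/d} P_i(N^{1/d} s) \;=\; a_{i,d_i}\, s^{d_i} \;+\; \sum_{\sigma_i \leq \beta < d_i} a_{i,\beta}\, N^{(\beta - d_i)/d}\, s^\beta,
\]
and $t \geq \delta N^{1/d}$ becomes $s \geq \delta$. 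The leading coefficients of $\tilde\gamma$ coincide with those of $\gamma$, while the lower-order coefficients are uniformly bounded by $|a_{i,\beta}|$ independently of $N \geq 1$. It therefore suffices to prove the following stronger statement: for every $\epsilon \in (0, 1/2)$ there exists $\delta$ of the form \eqref{delta} such that whenever $B = \prod_i [0, L_i]$ with $L_i \geq 1$, whenever $\tilde\gamma = (\tilde P_i)_i$ has $\deg \tilde P_i = d_i$ (distinct) with prescribed leading coefficients and uniformly bounded lower-order coefficients, and whenever $\tilde E \subset B$ has $|\tilde E|/|B| \geq \epsilon$, there exist $y,\, y + \tilde\gamma(s) \in \tilde E$ with $s \geq \delta$. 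Crucially, $c$ in \eqref{delta} is required to depend only on the $d_i$, the $a_{i,d_i}$ and the uniform bound for the lower-order coefficients, and in particular not on the $L_i$.

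To establish this stronger statement I would follow the strategy of Theorem~\ref{thm1}: analyse the bilinear counting functional
\[
\Lambda(f, g) \;=\; \iint f(y)\, g(y + \tilde\gamma(s))\, \eta(s)\, dy\, ds
\]
with $\eta$ a bump supported in $[1, 2]$, isolate the main term $\sim \epsilon^2$ via a spatial mollification at an appropriately chosen scale, and control the resulting error through Plancherel in terms of the oscillatory integral $I(\xi) = \int e^{-2\pi i \xi \cdot \tilde\gamma(s)} \eta(s)\, ds$. The phase $\phi(s) = \xi \cdot \tilde\gamma(s)$ is a real polynomial of degree at most $d_n$ whose coefficient of $s^{d_n}$ is exactly $a_{n,d_n}\, \xi_n$, independent of the perturbations. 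An induction on $n$ that uses the distinctness of the degrees then produces, for each $\xi \neq 0$, some $k \leq d$ and some $i$ with $|\phi^{(k)}(s)| \gtrsim |\xi_i|$ on $[1, 2]$; van der Corput's lemma then yields a uniform decay $|I(\xi)| \lesssim (1 + |\xi|)^{-\alpha}$ with $\alpha > 0$ depending only on the $(d_i)$, and with implicit constants depending only on the leading data and the uniform bound on the lower-order coefficients. Running Bourgain's iteration with this decay produces the double-exponential bound \eqref{delta}.

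The main obstacle is precisely this uniformity of the oscillatory decay in the lower-order coefficients of $\tilde\gamma$. After the anisotropic rescaling these coefficients are only bounded, not small, so they cannot be handled as genuine perturbations of a monomial phase. The distinct-degree hypothesis is what rescues the argument, since it ensures that the top derivative $\phi^{(d_n)}$ of the phase depends solely on the leading datum and is immune to the perturbations; an induction on dimension, peeling off one coordinate at a time, then provides non-degeneracy at every scale of $\xi$. Without distinct degrees, the leading terms in different coordinates could conspire to cancel and the decay estimate would degrade with $N$, breaking the scheme; this is consistent with the authors' restriction to the distinct-degree setting.
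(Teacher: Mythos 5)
Your overall strategy is the paper's: exploit the distinct degrees through an anisotropic rescaling that fixes the leading coefficients, prove an oscillatory decay estimate uniform in $N$, and run Bourgain's reduction. But your particular reduction leaves two genuine gaps. First, after your rescaling the set lives in the anisotropic box $B_N=\prod_{i=1}^n[0,N^{1-d_i/d}]$, whose volume grows with $N$, and you then assert a main term ``$\sim\epsilon^2$'' obtained by spatial mollification. The main-term input (Lemma \ref{bou}) is a unit-cube statement; on a box of unbounded volume the functional $\Lambda(f,f)$ scales like $|B_N|$, and you never explain how you normalize, nor why the number of bad scales in Bourgain's iteration stays independent of the $L_i$. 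The fix is precisely the step the paper takes: pigeonhole first. The paper partitions $[0,N]^n$ into rectangles of size $N^{d_1/d}\times\cdots\times N^{d_n/d}$, picks one in which $E$ still has density $\epsilon$, and rescales \emph{that rectangle} to the unit cube, so the whole Theorem \ref{thm1} machinery applies verbatim; in your setup the analogous move is to pigeonhole $\tilde E$ down to a unit cube of $B_N$ before running the scheme. Without some such step, the ``stronger statement'' you formulate is not established by the argument you sketch.

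Second, your uniformity claim for the oscillatory integral is both harder than necessary and under-justified. You treat the lower-order coefficients of $\tilde\gamma$ as merely bounded, but in fact they equal $a_{i,\beta}N^{(\beta-d_i)/d}$, hence are $O(N^{-1/d})$: for $N$ large they are genuinely small, and this is exactly what Lemma \ref{lem3} exploits --- in the rescaled phase \eqref{s2-28} the only competing contributions of order $t^{d_{i_0}}$ come from higher-degree coordinates and carry factors $2^{-s(d_i-d_{i_0})}$, so the derivative bound \eqref{derivative-1} is immediate and uniform in $s$ (the remaining case $s=0$, i.e.\ bounded $N$, is Lemma \ref{lem2}, since distinct degrees imply linear independence). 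In your bounded-coefficient formulation, the claim that an ``induction on $n$'' yields some $k\le d$ and $i$ with $|\phi^{(k)}(s)|\gtrsim|\xi_i|$ on $[1,2]$ skates over exactly the cancellation you yourself flag: at unit scale a degree-$d_{i_0}$ lower-order term of a higher-degree coordinate can cancel the leading term of $P_{i_0}$, so the pigeonholed coordinate need not control any single derivative. The robust route is the one in Lemma \ref{lem2}: the coefficient vector of the phase is $\xi A$, and since the degrees are distinct the submatrix of $A$ in columns $d_1,\ldots,d_n$ is triangular with fixed nonzero diagonal $a_{i,d_i}$ and bounded entries, so $\|\widetilde A^{-1}\|$ is bounded uniformly over your family, $|\xi A|\gtrsim|\xi|$, and a van der Corput estimate for polynomial phases in terms of the largest coefficient finishes. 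Finally, Bourgain's scheme needs this decay for $t\asymp2^{-\ell}$ at every scale $\ell\ge\Gamma$ with loss at most $2^{\mathfrak{b}\ell}$ (the actual content of Lemma \ref{lem3}); a bound for a bump supported on $[1,2]$ alone is not sufficient as stated.
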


Note that Theorem \ref{thm2} cannot be  derived directly from Theorem \ref{thm1} by simply rescaling arguments. We will prove a stronger result instead and then an anisotropic rescaling is allowed in the proof to eliminate the dependence of the decay estimate on $N$.  Applying this trick, we can also give an extension of the corner-type Roth theorems proven in \cite[Theorem 1.1]{cg23}.

\begin{theorem}\label{thm3}
	Let $P_1,P_2:\mathbb{R}\rightarrow\mathbb{R}$ be two linearly independent polynomials with zero constant term and $\deg P_1<\deg P_2=:d$. Then for any $\epsilon\in(0,1/2)$, there exists a $\delta=\delta(\epsilon)$ with
	\begin{equation}\label{delta-2}
		\delta(\epsilon)=\exp(-\exp(c\epsilon^{-6}))
	\end{equation}
	for some constant $c>0$ only depending on $P_1,P_2$ such that, given any measurable set $S\subset [0,N]^2$ with measure $|S|\geq \epsilon N^2$, it contains a triplet
	\begin{equation*}
		(x,y),(x+P_1(t),y),(x,y+P_2(t))
	\end{equation*}
	with $t\geq \delta N^{1/d}$.
\end{theorem}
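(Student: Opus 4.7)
The plan is to follow the strategy outlined after Theorem~\ref{thm2}: perform an anisotropic rescaling that eliminates the scale $N$ from the polynomial curve, and prove a strengthened form of the corner-type Roth theorem whose constants are uniform across a bounded family of coefficients, so that the rescaling does not reintroduce an $N$-dependence.

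Setting $d_1 := \deg P_1$, the substitution $\tilde x = x/N^{d_1/d}$, $\tilde y = y/N$, $\tilde t = t/N^{1/d}$ transforms $S \subset [0,N]^2$ into $\tilde S \subset [0,N^{1-d_1/d}] \times [0,1]$ with $|\tilde S| \geq \epsilon N^{1-d_1/d}$, and replaces $(P_1(t),P_2(t))$ by $(\tilde P_1(\tilde t), \tilde P_2(\tilde t))$ where $\tilde P_i(\tilde t) := P_i(\tilde t N^{1/d})/N^{d_i/d}$ (with $d_2 := d$). A direct computation shows that each $\tilde P_i$ has the same leading coefficient as $P_i$, while every lower-order coefficient is damped by a negative power of $N^{1/d}$; in particular the family $\{(\tilde P_1, \tilde P_2)\}_{N \geq 1}$ is uniformly bounded in coefficient space. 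A pigeonhole on unit vertical strips $[a,a+1]\times[0,1]$ then produces a unit sub-square on which $\tilde S$ has density $\gtrsim \epsilon$, and after translation the task reduces to finding a corner-type triple for $(\tilde P_1, \tilde P_2)$ inside a density-$\epsilon$ subset of $[0,1]^2$ with $\tilde t \geq \delta(\epsilon)$, where $\delta$ must be independent of $N$. Undoing the rescaling will then yield the gap $t \geq \delta(\epsilon) N^{1/d}$.

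The substantive step is therefore to prove a uniform strengthening of \cite[Theorem 1.1]{cg23}: for any pair $(Q_1, Q_2)$ in the bounded family above, any density-$\epsilon$ subset of $[0,1]^2$ contains such a triple with $\tilde t \geq \exp(-\exp(c\epsilon^{-6}))$, where $c$ depends only on the leading coefficients of $P_1, P_2$ and on $d$. The scheme from \cite{cg23} --- smoothing the indicator, writing the counting operator as a trilinear form, Bourgain's dyadic reduction on the $t$-scale, and a density-increment iteration --- carries over without structural changes; the only polynomial-dependent input is an $L^\infty$ decay estimate for an oscillatory bilinear operator with phase $\xi_1 Q_1(t) + \xi_2 Q_2(t)$, which by a van der Corput / stationary-phase analysis is governed by the leading terms of $Q_1$ and $Q_2$. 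The hard part will be precisely the uniformity of this decay estimate: one must verify that the critical-point structure of the phase and the associated van der Corput thresholds are controlled by the leading coefficients alone, with the bounded lower-order coefficients contributing only harmless smooth perturbations that do not destroy the decay. Once this uniform oscillatory bound is established, feeding it into the Bourgain iteration of \cite{cg23} produces a $\delta(\epsilon)$ of the stated form that is free of $N$, completing the proof.
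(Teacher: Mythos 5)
Your overall framework coincides with the paper's: the anisotropic rescaling $x\mapsto N^{d_1/d}\tilde x$, $y\mapsto N\tilde y$, $t\mapsto N^{1/d}\tilde t$, a pigeonhole onto a unit square where the density is still $\gtrsim\epsilon$, and then the Bourgain-type reduction of \cite{cg23} for the rescaled pair $(\tilde P_1,\tilde P_2)$ (with $N=2^{sd}$ these are exactly the paper's $P_{j,s}(t)=2^{-sd_j}P_j(2^st)$; the paper merely pigeonholes on $2^{sd_1}\times2^{sd_2}$ rectangles before rescaling rather than after, which is equivalent). The divergence, and the genuine gap, is in the step you yourself flag as ``the hard part'': the uniform-in-$N$ decay estimate.

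You assert that the decay is ``governed by the leading terms of $Q_1$ and $Q_2$,'' with the bounded lower-order coefficients acting as harmless smooth perturbations, so that $c$ depends only on the leading coefficients and $d$. This is false in the regime that actually matters inside Bourgain's scheme. The $t$-integration is localized to dyadic scales $t\asymp2^{-\ell}$, $\ell\geq\Gamma$, and at such a scale the degree-$\beta$ monomial of $\tilde P_j$ contributes $\asymp|a_{j,\beta}|\,2^{-sd_j+\beta(s-\ell)}$: when $s>\ell$ the top-degree term dominates (there your heuristic is fine, and the distinct degrees $d_1\neq d_2$ give a loss-free bound), but when $\ell>s$ the \emph{lowest}-degree terms dominate and the leading monomials are the negligible perturbation, not the other way around. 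In that regime the correctly normalized polynomials are the $\widetilde P_{j,s-\ell}$ of \eqref{s7-4} with $\mathfrak{r}_j=\sigma_j$; the lowest degrees $\sigma_1,\sigma_2$ may coincide, and the required bound is a genuinely bilinear $L^2\times L^2\to L^1$ smoothing estimate (Theorem \ref{thm-cg}, i.e.\ \cite[Theorem 1.6]{cg23}), not a scalar van der Corput bound for the phase $\xi_1Q_1(t)+\xi_2Q_2(t)$, and it comes with an unavoidable loss $2^{\mathfrak{b}|s-\ell|}$ that no perturbation of the leading monomials will produce. The paper closes exactly this point by observing that, after localizing $t\asymp2^{-\ell}$ and rescaling by $A_j=2^{(d_j-\mathfrak{r}_j)s+\ell\mathfrak{r}_j}$, the operator becomes $\widetilde T_{s-\ell}$ on the nose, so Theorem \ref{thm-cg} applies verbatim, and the loss is acceptable because either $s>\ell$ (distinct top degrees, so $\mathfrak{b}=0$) or $s<\ell$ and $2^{\mathfrak{b}(\ell-s)}\leq2^{\mathfrak{b}\ell}$, which the iteration absorbs. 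To repair your argument you should either invoke Theorem \ref{thm-cg} in this way or reprove the bilinear estimate in both regimes; as written, the claimed leading-coefficient-only uniformity (and the dependence of $c$ only on the leading coefficients and $d$, which is stronger than the paper's dependence on $P_1,P_2$) is not established.
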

In \cite{cdr21}, an observation was noted that if $E\subset[0,N]^2$ with $|E|\geq \epsilon N^2$ and $\widetilde{E}=\{(x,y)\in[0,N]^2: x-y\in E\}$, we have $|\widetilde{E}|\geq \epsilon N^2$. Then the one-dimensional bipolynomial Roth theorem obtained in \cite{cgl21} can be  regained by Theorem \ref{thm3}.

\begin{remarks}
	It is also an interesting problem whether the gap estimate \eqref{delta} can be improved to $\exp(-\delta^{-C})$ for some constant $C$ depending only on $\gamma$ as in \cite[Theorem 1]{dks23}.
\end{remarks}

\medskip

{\it Notations.} For real $X$ and nonnegative $Y$, we use $X\lesssim Y$ to denote $|X|\leq CY$ for some constant $C$. We write $X\lesssim_p Y$ to indicate that the implicit constant $C$ depends on a parameter $p$. If $X$ is nonnegative, $X\gtrsim Y$ means $Y\lesssim X$. The Landau notation $X=O_p(Y)$ is equivalent to $X\lesssim_p Y$. The notation $X\asymp Y$ means that $X\lesssim Y$ and $Y\lesssim X$. We let $\mathbb{N}_0=\mathbb{N}\cup \{0\}$ and $e(x)=\exp(2\pi i x)$. The Fourier transform of $f$ is $\widehat{f}(\xi)=\mathcal{F}(f)(\xi)=\int_{\mathbb{R}} \! f(x)e(-\xi x) \,\textrm{d}x$. $a \gg (\ll)$ $b$ means $a$ is much greater (less) than $b$.  $\textbf{1}_{E}$ represents the characteristic function of a set $E$. For positive integer $n\in\mathbb{N}$, denote $[n]=\{1,2,\ldots,n\}$.

\section{Preliminaries}\label{sec2}

Throughout this paper,  we let $\rho$ be a nonnegative radial smooth bump function on $\mathbb{R}^n$ which is compactly supported and constant on $[-1,1]^n$. We normalize it such that $\widehat{\rho}(0)=1$ and denote $\rho_\ell(x)=2^{n\ell}\rho(2^{\ell}x)$. Let $\tau$ be a non-negative smooth bump function supported on $[1/2,2]$ with $\int\!\tau\,\mathrm{d}t=1$. Set $\tau_{\ell}(t)=2^\ell\tau(2^\ell t).$

We need  several lemmas below. The first lemma is a generalization of \cite[Lemma 6]{bou88}.   
\begin{lemma}\label{bou}
	For a nonnegative function $f$ supported on $[0,1]^n$, and for $s\geq 1$, $k_1,k_2,\ldots,k_{s-1}\in\mathbb{N}_0$, we have
	\begin{equation}\label{s2-1}
		\int_{\mathbb{R}^n}\!f\prod_{j=1}^{s-1}(f*\rho_{k_j})\,\mathrm{d}x\geq c \left(\int_{\mathbb{R}^n}\!f\, \mathrm{d}x\right)^s
	\end{equation}
	for some $c>0$ depending on $s$ and  $\rho$.
\end{lemma}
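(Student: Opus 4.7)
The plan is to replace each convolution $f*\rho_{k_j}$ by a dyadic local average of $f$ at scale $2^{-k_j}$, and then collapse the resulting multi-scale sum by an iteration of Jensen's inequality. Since $\rho$ is nonnegative and constantly equal to some $c_0>0$ on $[-1,1]^n$, we have the pointwise bound $\rho_{k_j}(y)\geq c_0\,2^{nk_j}\mathbf{1}_{[-2^{-k_j},2^{-k_j}]^n}(y)$. For any point $x$, the $\ell^\infty$-ball of radius $2^{-k_j}$ about $x$ contains the unique dyadic cube $Q_{k_j}(x)$ of side $2^{-k_j}$ that contains $x$, so
\[(f*\rho_{k_j})(x)\ \geq\ c_0\,\langle f\rangle_{Q_{k_j}(x)},\]
where $\langle f\rangle_Q$ denotes the mean of $f$ over $Q$.

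After reindexing so that $k_1\leq k_2\leq\cdots\leq k_{s-1}$, the cubes $Q_{k_j}(x)$ form a nested tower of dyadic ancestors, and the left-hand side of \eqref{s2-1} is bounded below by $c_0^{s-1}$ times
\[S\ :=\ \sum_R |R|\,\langle f\rangle_R\,\prod_{j=1}^{s-1}\langle f\rangle_{Q_{k_j}(R)},\]
where $R$ runs over the dyadic cubes of side $2^{-k_{s-1}}$ contained in $[0,1]^n$ and $Q_{k_j}(R)\supset R$ is the dyadic ancestor of $R$ at scale $2^{-k_j}$. I estimate $S$ by telescoping from the finest scale outward, using the elementary convexity inequality
\[\sum_{R\subset R'} |R|\,\langle f\rangle_R^p\ \geq\ |R'|\,\langle f\rangle_{R'}^p,\qquad p\geq 1,\]
which is Jensen applied to the piecewise-constant approximation of $f$ on $R'$. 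Each grouping step raises the exponent of $\langle f\rangle$ at the current scale by one while moving up to the next coarser scale; after $s-1$ steps the sum reduces to $\sum_{R_1}|R_1|\langle f\rangle_{R_1}^s$ with $R_1$ ranging over dyadic cubes of side $2^{-k_1}$ in $[0,1]^n$, and one last Jensen on the unit cube produces $S\geq\bigl(\int_{\mathbb{R}^n} f\bigr)^s$.

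The only genuinely technical point is the dyadic containment used in the first reduction: for any point $x$ of a dyadic cube of side $2^{-k}$, the $\ell^\infty$-ball of radius $2^{-k}$ about $x$ must contain that cube, which is an immediate coordinatewise check. Beyond this, the argument is pure bookkeeping in iterated convexity, and yields \eqref{s2-1} with constant $c=c_0^{s-1}$ depending only on $s$ and $\rho$, as required.
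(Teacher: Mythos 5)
Your proof is correct: the domination $f*\rho_{k_j}\geq c_0\langle f\rangle_{Q_{k_j}(x)}$ by dyadic averages, followed by the iterated Jensen/telescoping up the dyadic tree, gives \eqref{s2-1} with $c=c_0^{s-1}$ (using, as intended, that the constant value of $\rho$ on $[-1,1]^n$ is positive). The paper does not prove the lemma itself but defers to Kova\v{c} \cite{kovac22}, and your argument is essentially that standard martingale-type proof, so there is nothing further to flag.
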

In this paper, we only need the case $s=2$.  One can refer \cite{kovac22} for the proof of this lemma.

Let $P_i(t)$, $\gamma(t)$ be defined by \eqref{polys}, \eqref{gamma} respectively. For $s\in\mathbb{Z}$, set $$\gamma_s(t)=(2^{-d_1s}P_1(2^st),2^{-d_2s}P_2(2^st),\ldots,2^{-d_ns}P_n(2^st)).$$
Define
\begin{equation}\label{s4-7}
	T_{s,\ell}f(x)=\int_{\mathbb{R}}\!f(x+\gamma_s(t))\tau_{\ell}(t)\,\mathrm{d}t.
\end{equation}
We can obtain some estimates of $T_{s,\ell}$ as bellow. 

\begin{lemma}\label{lem3}
	Let $n\geq1$ be an integer, and let $P_i(t)$ and $\gamma(t)$ be defined by \eqref{polys} and \eqref{gamma} respectively. If we additionally assume that the polynomials have distinct degrees and $d=\max_{1\leq i\leq n} d_i$, then there exist  a sufficiently large integer $\Gamma\gg 1$ and a constant $\mathfrak{b}>0$ only depending on $\gamma$  such that for any $s\in\Gamma(2\mathbb{N}_0)$, $\ell\in \Gamma(\mathbb{N}_0\setminus2\mathbb{N}_0)$\footnote{Here $\Gamma(2\mathbb{N}_0)=\{0,2\Gamma,4\Gamma,\ldots\}$ and $\Gamma(\mathbb{N}_0\setminus2\mathbb{N}_0)=\{\Gamma,3\Gamma,5\Gamma,\ldots\}$.} it follows that
	\begin{equation}\label{s2-20}
		\|T_{s,\ell}f\|_2\lesssim2^{\mathfrak{b}\ell} 2^{-k/d}\|f\|_2
	\end{equation}
	for all measurable functions $f$ on $\mathbb{R}^n$ such that  $\supp \widehat{f}\subset\{\xi\in\mathbb{R}:2^k\leq|\xi|< 2^{k+1}\}$.
\end{lemma}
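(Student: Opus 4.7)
The proof will be carried out by reading $T_{s,\ell}$ as a Fourier multiplier. By Plancherel, the claimed $L^2\to L^2$ bound will follow from a uniform estimate of the symbol $m_{s,\ell}(\xi)$ on the frequency annulus $2^k\le|\xi|<2^{k+1}$. After peeling off the smooth cutoffs at scale $2^s$ (which only contribute tame factors to be absorbed into $2^{\mathfrak b\ell}$), the essential piece of $m_{s,\ell}$ is the oscillatory integral
\begin{equation*}
J_\ell(\xi)=\int e\bigl(-\xi\cdot\gamma(t)\bigr)\tau_\ell(t)\,dt.
\end{equation*}
Rescaling by $u=2^\ell t$ converts this into $\int e(-\Phi_\ell(\xi;u))\tau(u)\,du$ with $\Phi_\ell(\xi;u)=\sum_i\xi_i P_i(2^{-\ell}u)$, a polynomial in $u$ of degree at most $d$.

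The core estimate is van der Corput's lemma applied to $\Phi_\ell$; the distinct-degree hypothesis is what makes it run uniformly in the direction of $\xi$. Ordering the degrees $d_{i_1}<\cdots<d_{i_n}=d$, one observes that
\begin{equation*}
\partial_u^{d_{i_r}}\Phi_\ell(\xi;u)=\sum_{q\ge r}\xi_{i_q}\,\partial_u^{d_{i_r}}\bigl(P_{i_q}(2^{-\ell}u)\bigr),
\end{equation*}
and, after extracting the $u$-independent scaling $2^{-\ell d_{i_r}}$, the resulting linear forms in $(\xi_{i_1},\ldots,\xi_{i_n})$ are upper triangular with nonzero diagonal entries $d_{i_r}!\,a_{i_r,d_{i_r}}$. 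A Cramer's rule argument then yields, for every $\xi$ with $|\xi|\asymp 2^k$, some index $r$ and a point $u_0\in[1/2,2]$ at which $|\partial_u^{d_{i_r}}\Phi_\ell(\xi;u_0)|\gtrsim 2^{-C\ell}|\xi|$ for some $C=C(\gamma)$. Van der Corput's lemma at order $d_{i_r}\le d$ then gives
\begin{equation*}
|J_\ell(\xi)|\lesssim \bigl(2^{-C\ell}|\xi|\bigr)^{-1/d_{i_r}}\lesssim 2^{\mathfrak b\ell}2^{-k/d}
\end{equation*}
on $|\xi|\asymp 2^k$, using $d_{i_r}\le d$ and absorbing the polynomial-in-$\ell$ losses into $\mathfrak b=\mathfrak b(\gamma)$. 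Choosing $\Gamma$ sufficiently large, and exploiting the parity separation between $s$ and $\ell$, ensures that the analysis is valid throughout the admissible range of parameters.

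The main obstacle is precisely the uniformity in the direction of $\xi$. When the component $\xi_{i_n}$ attached to the top-degree polynomial is dominant, $\partial_u^d\Phi_\ell$ already has modulus $\asymp 2^{-\ell d}|\xi|$ and van der Corput at order $d$ yields $2^{-k/d}$ decay essentially immediately. When $\xi_{i_n}$ is comparatively small, one must instead use a lower-order derivative and verify quantitatively that the corresponding coefficient is still bounded below with a controlled $\ell$-loss; this is where the generalized Vandermonde structure associated with the distinct degrees plays its role, and where the various $\ell$-powers appearing in $2^{\mathfrak b\ell}$ accumulate. Keeping careful bookkeeping of these constants across the different cases, and verifying that they are all dominated by a single $\mathfrak b(\gamma)$, is the most technical part of the argument.
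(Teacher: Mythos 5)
Your multiplier reduction and the van der Corput estimate driven by the distinct-degree (upper triangular) structure of the phase derivatives is essentially the engine of the paper's proof, and that part of your analysis is sound. The genuine gap is that you misidentify the operator and, with it, the role of $s$. In the paper $T_{s,\ell}$ is the average along the \emph{anisotropically rescaled} curve $\gamma_s(t)=(2^{-sd_1}P_1(2^st),\ldots,2^{-sd_n}P_n(2^st))$, so the relevant symbol is $\psi(|\xi|)\int e^{2\pi i\,2^k\gamma_s(t)\cdot\xi}\tau_\ell(t)\,\mathrm{d}t$, not $\int e(-\xi\cdot\gamma(t))\tau_\ell(t)\,\mathrm{d}t$ modified by ``smooth cutoffs at scale $2^s$''. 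The whole point of Lemma \ref{lem3} (as opposed to Lemma \ref{lem2}) is that the bound $2^{\mathfrak b\ell}2^{-k/d}$ is \emph{uniform in $s$}: in the proof of Theorem \ref{thm2} one takes $N=2^{sd}$, so any loss in $s$ would reintroduce a dependence on $N$ and defeat the purpose of the lemma. Declaring the $s$-dependent contributions ``tame factors to be absorbed into $2^{\mathfrak b\ell}$'' is therefore not legitimate: $s$ ranges over all of $\Gamma(2\mathbb N_0)$ independently of $\ell$, so nothing that grows with $s$ can be hidden in $2^{\mathfrak b\ell}$. As written, your argument proves a distinct-degree case of Lemma \ref{lem2}, not the statement at hand.

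The repair is close to what you wrote, and this is where your route and the paper's differ slightly. With the correct operator, the substitution $u=2^\ell t$ gives the phase $\Phi_{s,\ell}(\xi;u)=\sum_i\xi_i2^{-sd_i}P_i(2^{s-\ell}u)$, whose coefficient of $u^\beta$ is $\sum_i a_{i,\beta}2^{-s(d_i-\beta)}\xi_i2^{-\ell\beta}$; since $d_i\ge\beta$ whenever $a_{i,\beta}\ne0$, the rescaling leaves your diagonal entries $d_{i_r}!\,a_{i_r,d_{i_r}}$ untouched and multiplies each off-diagonal entry by $2^{-s(d_{i_q}-d_{i_r})}\le 2^{-s}$, so your triangular matrix is invertible uniformly in $s$ and the Cramer argument goes through with $s$-independent constants. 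The paper exploits exactly this: for $s\in\Gamma(2\mathbb N_0)\setminus\{0\}$ the matrix is essentially diagonal, so it simply pigeonholes the largest coordinate $|\xi_{i_0}|\ge(2n)^{-1/2}$ and bounds the $d_{i_0}$-th derivative from below, while the case $s=0$ is delegated to Lemma \ref{lem2} (rank of the coefficient matrix). Two smaller points to fix in your write-up: van der Corput requires the derivative lower bound on all of $\supp\tau$, not at a single point $u_0$ (it does hold on the whole interval because the $u$-dependent corrections are $O(2^{-\ell})$ relative to the main term, for $\ell\ge\Gamma$); and the step $2^{-k/d_{i_r}}\le 2^{-k/d}$ needs $2^k\gtrsim 2^{d\ell}$ — in the complementary range one should just use the trivial bound $\|T_{s,\ell}f\|_2\lesssim\|f\|_2\le 2^{\ell-k/d}\|f\|_2$, which is how the paper argues via $\min\{1,\lambda^{-1/d_{i_0}}\}\le\lambda^{-1/d}$ with $\lambda=2^{k-d\ell}$.
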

\begin{proof}
	Let $\psi\in C_0^\infty(\mathbb{R})$ be a bump function supported on $[1/2,4]$ and be equal to 1 on $[1,2]$.
	By the Fourier inversion theorem, we can write $T_{s,\ell}f$ as
	\begin{equation}\label{s4-24}
		\int_{\mathbb{R}^n}\!e^{2\pi i x\cdot\xi}\widehat{f}(\xi)m_{k,s,\ell}(2^{-k}\xi)\,\mathrm{d}\xi,
	\end{equation}
	with
	\begin{equation}
		m_{k,s,\ell}(\xi)=\psi(|\xi|)\int_{\mathbb{R}}\!e^{2\pi i2^k\gamma_s(t)\cdot\xi}\tau_\ell(t)\,\mathrm{d}t\label{s2-27}
	\end{equation}
	We will show that 
	\begin{equation}\label{s2-23}
		\|m_{k,s,\ell}\|_\infty\lesssim 2^{c\ell-k/d}
	\end{equation}
	holds for any $s\in\Gamma(2\mathbb{N}_0)$, $\ell\in \Gamma(\mathbb{N}_0\setminus2\mathbb{N}_0)$.
	Applying the Plancherel theorem to \eqref{s4-24} and using the estimate \eqref{s2-23}, we obtain that
	\begin{equation}
		\|T_{s,\ell}f\|_2=\|m_{k,s,\ell}(2^{-k}\cdot)\widehat{f}\|_2\lesssim  2^{c\ell}2^{-k/d}\|f\|_2.
	\end{equation}
	
	It remains to prove \eqref{s2-23}. For fixed $\xi$ with $|\xi|\in\supp \psi\subset [1/2,4]$,  we have 
	\begin{equation}\label{s2-30}
		|\xi_i|\leq 2, \text{ for $1\leq i\leq n$}.
	\end{equation}
	And by the pigeonhole principle, there exists $1\leq i_0\leq n$ such that
	\begin{equation}\label{s2-21}
		|\xi_{i_0}|\geq \frac{1}{\sqrt{2n}}.
	\end{equation}	  
	We  can rewrite \eqref{s2-27} as
	\begin{equation}
		\psi(|\xi|)\int_{\mathbb{R}}\!e^{2\pi i\lambda\phi_{s,\ell}(t,\xi)}\tau(t)\,\mathrm{d}t,
	\end{equation}
	where $\lambda=2^{k-d\ell}$ and the phase function $\phi_{s,\ell}(t,\xi)$ has the form
	\begin{equation}\label{s2-28}
		\phi_{s,\ell}(t,\xi)=\sum_{1\leq\beta\leq d}\left(\sum_{\substack{1\leq i\leq n\\\sigma_i\leq\beta\leq d_i}}a_{i,\beta}2^{-sd_i}\xi_i\right)2^{s\beta+(d-\beta)l}t^{\beta}.
	\end{equation}
	Note that for any $\alpha\in\mathbb{N}_0$, we have
	\begin{equation}\label{derivative}
		\left|\frac{\partial^{\alpha}\phi_{s,\ell}}{\partial t^{\alpha}}(t,\xi)\right|\lesssim 2^{c\ell}
	\end{equation}
	for some $c\geq0$.
	
	Since $P_i(t)$'s have distinct degrees and changing the order of the axes is not important to the final result, we may without loss of generality assume that $1\leq d_1<d_2<\ldots<d_n=d$. When $s\in\Gamma(2\mathbb{N}_0)\setminus\{0\}$, combining \eqref{s2-28}, \eqref{s2-30} and \eqref{s2-21},  we can compute directly to show that
	\begin{equation}\label{derivative-1}
		\left|\frac{\partial^{d_{i_0}}\phi_{s,\ell}}{\partial t^{d_{i_0}}}(t,\xi)\right|=d_{i_0}!\cdot 2^{(d-d_{i_0})\ell}|\xi_{i_0}|\left(1+o(1)\right)\geq\frac{1}{2\sqrt{2n}}\asymp 1
	\end{equation}
	for any $t\in\supp \tau$ and   $\ell\in \Gamma_0(\mathbb{N}_0\setminus2\mathbb{N}_0)$. By \eqref{derivative}, \eqref{derivative-1} and integration by parts or the method of stationary phase, we get
	\begin{equation*}
		|m_{k,s,\ell}(\xi)|\lesssim 2^{c'\ell}\min\{1, \lambda^{-1/d_{i_0}}\}\leq 2^{c'\ell}\lambda^{-1/d}=2^{c''\ell-k/d}.
	\end{equation*}
	When $s=0$, we can actually derive \eqref{s2-23} for all linearly independent polynomials. We will show this result in Lemma \eqref{lem2} below.
\end{proof}

\begin{lemma}\label{lem2}
	Let $n\geq1$ be an integer, and let $P_i(t)$ and $\gamma(t)$ be defined by \eqref{polys} and \eqref{gamma} respectively. If we additionally assume that the polynomials are linearly independent, then there exist  a sufficiently large $\Gamma\gg 1$ and a constant $\mathfrak{b}\geq 0$ only depending on $\gamma$  such that for any $\ell\geq \Gamma$ it follows that
	\begin{equation}\label{s4-2}
		\|T_{0,\ell}f\|_2\lesssim2^{\mathfrak{b}\ell} 2^{-k/d}\|f\|_2
	\end{equation}
	for all measurable functions $f$ on $\mathbb{R}^n$ such that  $\supp \widehat{f}\subset\{\xi\in\mathbb{R}:2^k\leq|\xi|< 2^{k+1}\}$.
\end{lemma}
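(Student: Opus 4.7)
The plan is to parallel the proof of Lemma \ref{lem3} in the $s=0$ case, replacing the ``distinct degree'' observation \eqref{derivative-1} by a quantitative version of linear independence. First, writing $T_{0,\ell}f$ via its Fourier multiplier as in \eqref{s4-24}--\eqref{s2-27} and substituting $u = 2^{\ell}t$, one has
\[
m_{k,0,\ell}(\xi) \;=\; \psi(|\xi|)\int_{\mathbb{R}} e^{2\pi i\, 2^{k}\sum_{\beta=1}^{d} c_{\beta}(\xi)\, 2^{-\beta \ell}\, u^{\beta}}\,\tau(u)\,\mathrm{d}u,
\]
where $c_{\beta}(\xi) := \sum_{i:\,\sigma_{i}\leq\beta\leq d_{i}} a_{i,\beta}\,\xi_{i}$. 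By Plancherel it suffices to prove $\|m_{k,0,\ell}\|_{\infty}\lesssim 2^{\mathfrak{b}\ell}2^{-k/d}$, and the bound is trivial for $k<0$ since $|m_{k,0,\ell}|\leq\|\tau\|_{1}\lesssim 1$.

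Second, the linear independence of $\{P_{i}\}_{i=1}^{n}$ means that the linear map $\xi\mapsto(c_{\beta}(\xi))_{\beta=1}^{d}$ from $\mathbb{R}^{n}$ to $\mathbb{R}^{d}$ has trivial kernel. Combined with compactness of the annulus $\{|\xi|\in\operatorname{supp}\psi\}\subset\{1/2\leq|\xi|\leq 4\}$, this yields a constant $\epsilon_{0}>0$ depending only on $\gamma$ such that
\[
\max_{1\leq\beta\leq d}|c_{\beta}(\xi)|\;\geq\;\epsilon_{0}\quad\text{for every $\xi$ in this annulus.}
\]
For each such $\xi$ I set $\beta_{0}=\beta_{0}(\xi):=\min\{\beta\in[d]:|c_{\beta}(\xi)|\geq\epsilon_{0}\}\in\{1,\dots,d\}$ and examine the $\beta_{0}$-th $u$-derivative of the phase on $\operatorname{supp}\tau\subset[1/2,2]$. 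The $\beta=\beta_{0}$ contribution has magnitude at least $\beta_{0}!\,\epsilon_{0}\,2^{k-\beta_{0}\ell}$, while the higher-order terms ($\beta>\beta_{0}$) are bounded by $O(2^{k-\beta\ell})$ because $|c_{\beta}(\xi)|\lesssim|\xi|\lesssim 1$. Taking $\Gamma$ sufficiently large so that $2^{-\Gamma}\ll\epsilon_{0}$, the leading term dominates for all $\ell\geq\Gamma$, giving
\[
\left|\partial_{u}^{\beta_{0}}\Bigl(2^{k}\sum_{\beta=1}^{d} c_{\beta}(\xi)\,2^{-\beta\ell}u^{\beta}\Bigr)\right|\;\gtrsim\; 2^{k-\beta_{0}\ell}\qquad(u\in\operatorname{supp}\tau).
\]

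Third, applying van der Corput's lemma of order $\beta_{0}\geq 2$ (or, when $\beta_{0}=1$, a single integration by parts, which works because the first derivative is then essentially $c_{1}(\xi)\,2^{k-\ell}$ up to a $2^{-\ell}$ relative error and hence of definite sign) together with the uniform upper bound $|\partial_{u}^{\alpha}(\cdot)|\lesssim 2^{c\ell}$ analogous to \eqref{derivative} yields
\[
|m_{k,0,\ell}(\xi)|\;\lesssim\;\bigl(2^{k-\beta_{0}\ell}\bigr)^{-1/\beta_{0}}\;=\;2^{\ell}\cdot 2^{-k/\beta_{0}}\;\leq\;2^{\mathfrak{b}\ell}\cdot 2^{-k/d}
\]
uniformly in $\xi$, using $\beta_{0}\leq d$ and $k\geq 0$. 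Plancherel then gives \eqref{s4-2}.

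The main obstacle is that $\beta_{0}$ depends on $\xi$; however, because the oscillatory bound in the last step depends only on $\epsilon_{0}$, $d$, and the $L^{\infty}/L^{1}$ size of $\tau$ and its derivatives (none of which depend on the particular $\beta_{0}$ chosen), the estimate is automatically uniform in $\xi$ on the annulus. A secondary technical point is the handling of $\beta_{0}=1$, where one cannot directly invoke the higher-order van der Corput bound, but the dominance of the $\beta=1$ coefficient makes a straightforward non-stationary-phase integration by parts applicable.
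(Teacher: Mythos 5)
Your proof is correct and follows essentially the same route as the paper: reduce via Plancherel to a sup-norm bound on the multiplier $m_{k,0,\ell}$, use linear independence to obtain a uniform lower bound on some coefficient $c_{\beta_0}(\xi)$ of the rescaled phase polynomial, check that the higher-degree terms are suppressed by $2^{-\ell}$ for $\ell\geq\Gamma$, and conclude with van der Corput (or one integration by parts when $\beta_0=1$), using $\beta_0\leq d$ and $k\geq 0$. The only cosmetic difference is that you get $\max_{\beta}|c_{\beta}(\xi)|\geq\epsilon_0$ from trivial kernel plus compactness, while the paper extracts an invertible $n\times n$ submatrix $\widetilde{A}$ of the coefficient matrix and bounds $|\xi\widetilde{A}|$ from below; your explicit handling of the $\xi$-dependent order $\beta_0$, the $\beta_0=1$ case, and the uniformity in $\xi$ just fills in what the paper leaves as ``the similar argument as in the monomial case.''
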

\begin{proof}
	Plugging $s=0$ into \eqref{s2-27} gives
	\begin{align}
		m_{k,0,\ell}(\xi)&=\psi(|\xi|)\int_{\mathbb{R}}\!e^{2\pi i2^k\gamma(t)\cdot\xi}\tau_\ell(t)\,\mathrm{d}t\nonumber\\
		&=\psi(|\xi|)\int_{\mathbb{R}}\!e^{2\pi i\lambda\phi_\ell(t,\xi)}\tau(t)\,\mathrm{d}t,\label{s2-7}
	\end{align}
	where $\lambda=2^{k-d\ell}$ and the phase function
	\begin{equation}\label{s2-8}
		\phi_\ell(t,\xi)=2^{d\ell}\gamma(2^{-\ell}t)\xi,
	\end{equation}
	and 
	\begin{equation}\label{derivative-2}
		\left|\frac{\partial^{\alpha}\phi_{\ell}}{\partial t^{\alpha}}(t,\xi)\right|\lesssim 2^{c\ell}
	\end{equation}
	for any $\alpha\in\mathbb{N}_0$ and some constant $c\geq0$.
	It is suffices to show that 
	\begin{equation}\label{s2-13}
		\|m_{k,0,\ell}\|_\infty\lesssim \lambda^{-1/d}=2^{\ell-k/d}.
	\end{equation}
	
	We first consider a toy model $$\gamma(t)=(t^{d_1},t^{d_2},\ldots,t^{d_n}),$$ where $d_1,\ldots,d_n$ are distinct positive integers with $1\leq d_1<d_2<\ldots<d_n=d$.
	Then the phase function
	\begin{equation}\label{s2-9}
		\phi_\ell(t,\xi)=2^{(d-\alpha_1)\ell}t^{\alpha_1}\xi_1+\ldots2^{(d-\alpha_n)\ell}t^{\alpha_n}\xi_n. 
	\end{equation}
	For fixed $\xi$ with $|\xi|\in\supp \psi\subset [1/2,4]$,  we have 
	\begin{equation}\label{s2-10}
		|\xi_i|\leq 2, \text{ for $1\leq i\leq n$}.
	\end{equation}
	And by the pigeonhole principle, there exists $1\leq i_0\leq n$ such that
	\begin{equation}\label{s2-11}
		|\xi_{i_0}|\geq \frac{1}{\sqrt{2n}}.
	\end{equation}	 
	Combining \eqref{s2-9}, \eqref{s2-10} and \eqref{s2-11},  we have 
	\begin{equation}\label{derivative-3}
		\left|\frac{\partial^{d_{i_0}}\phi_\ell}{\partial t^{d_{i_0}}}(t,\xi)\right|=d_{i_0}!\cdot 2^{(d-d_{i_0})\ell}|\xi_{i_0}|\left(1+o(1)\right)\geq\frac{1}{2\sqrt{2n}}\asymp 1,
	\end{equation}
	for $t\in\supp \tau$ and  sufficiently large $\ell$ depending on $\gamma$. By \eqref{derivative-2}, \eqref{derivative-3} and integration by parts or the method of stationary phase, we get
	\begin{equation*}
		|m_{k,0,\ell}(\xi)|\lesssim 2^{c'\ell}\min\{1, \lambda^{-1/d_{i_0}}\}\leq 2^{c'\ell}\lambda^{-1/d}=2^{c''\ell-k/d}.
	\end{equation*}

	For the general case, let \[\gamma(t)=(P_1(t),\ldots,P_n(t)),\]
	where the $P_i(t)$'s, $1\leq i\leq n $, are linearly independent polynomials with $P_i(0)=0$. For $1\leq i\leq n$ we can  write
	\[P_i(t)=\sum_{1\leq\beta\leq d}a_{i,\beta}t^{\beta}\]
	with  $d=\max_{1\leq i\leq n} \deg P_i $ and $a_{i,\beta}=0$ for $\beta<\sigma_i$ or $>d_i$.
	Denote $A=\left(a_{i,j}\right)_{n\times d}$ the  coefficients matrix. Then $A$ has rank $n$ and $\gamma(t)^{T}=A\cdot(t^1,\ldots,t^d)^T$.

	For $\xi\in \supp \psi$, we rewrite the phase function as
	\begin{equation*}
		\phi_\ell(t,\xi)=2^{d\ell}\xi\cdot\gamma(2^{-\ell}t)=\xi A\cdot(2^{(d-1)\ell}t^1,\ldots,t^d)^T.
	\end{equation*}
	Since $\mathrm{rank}(A)=n$, there exists a subset $J\subset [d]$ with $|J|=n$ such that the matrix $\widetilde{A}=\left(a_{i,j}\right)_{i\in J,j\in [n]}$ is invertible. Denote $\widetilde{A}^{-1}$ the inverse matrix of $\widetilde{A}$. Then for any fixed $\xi\in \supp\psi$, we have\footnote{For a matrix $A\in M^{m\times n}$, $\|A\|$ denotes its operator norm if $A$ is viewed as a linear transform from $\mathbb{R}^m$ to $\mathbb{R}^n$ defined by $\xi\mapsto\xi A$.}
	\begin{equation*}
		1/2\leq|\xi|=|\xi\widetilde{A}\widetilde{A}^{-1}|\leq\|\widetilde{A}^{-1}\|\cdot |\xi\widetilde{A}|,
	\end{equation*}
	which means that $|\xi\widetilde{A}|$ has a lower bound $1/(2\|\widetilde{A}^{-1}\|)>0$. On the other hand, $|\xi\widetilde{A}|$ has an upper bound $4\|\widetilde{A}\|$.
	Then \eqref{s2-13} follows by the similar argument as in the monomial case. 
	
\end{proof}

\section{Proof of theorem \ref{thm1} and theorem \ref{thm2}}\label{sec3}

In this section, we will prove Theorem \ref{thm1} and Theorem \ref{thm2} by Bourgain's reduction and the lemmas proven in Section \ref{sec2}. 
\subsection{Proof of Theorem \ref{thm1}}\label{sub-thm1}
We first assume that  \[\gamma(t)=(P_1(t),\ldots,P_n(t)),\]
where the  $P_i$'s are linearly independent polynomials with $P_i(0)=0$.
To prove Theorem \ref{thm1}, it suffices to show that for each $\epsilon>0$ , there exists a $\delta =\delta(\epsilon)$ satisfying \eqref{delta} such that
\begin{equation*}
	I=\int_{[0,1]^n}\int_0^1\!f(x)f(x+\gamma(t))\,\mathrm{d}t\mathrm{d}x\geq \delta
\end{equation*}
holds for all measurable functions $f$ with $\supp(f)\subset [0,1]^n$, $0\leq f\leq 1$ and $\int_{\mathbb{R}^n}\!f\,\mathrm{d}x\geq \epsilon$. Indeed, Theorem \ref{thm1} follows by taking $f=\mathbf{1}_E$.

For any $1\ll\Gamma\leq \ell'\ll \ell\ll \ell''$, we have
\begin{align}
	2^\ell I&\gtrsim \int_{[0,1]^n}\int_0^1\!f(x)f(x+\gamma(t))\tau_\ell(t)\,\mathrm{d}t\mathrm{d}x \nonumber\\
	&=I_1+I_2+I_3,\label{s4-12}
\end{align}
where
\begin{align*}
	I_1=&\int_{[0,1]^n}\int_0^1\!f(x)f*\rho_{\ell'}(x+\gamma(t))\tau_\ell(t)\,\mathrm{d}t\mathrm{d}x \\
	I_2=&\int_{[0,1]^n}\int_0^1\!f(x)(f*\rho_{\ell''}-f*\rho_{\ell'})(x+\gamma(t))\tau_{\ell}(t)\,\mathrm{d}t\mathrm{d}x \\
	I_3=&\int_{[0,1]^n}\int_0^1\!f(x)(f-f*\rho_{\ell''})(x+\gamma(t))\tau_{\ell}(t)\,\mathrm{d}t\mathrm{d}x .
\end{align*}

For the term $I_2$, by the Cauchy-Schwarz inequality, we have
\begin{equation}\label{s4-14}
	|I_2|\leq\|f*\rho_{\ell''}-f*\rho_{\ell'}\|_2.
\end{equation}

For the term $I_3$,  by the Littlewood-Paley decomposition, for $k_0\in\mathbb{Z}$ to be determined later, 
\begin{equation}\label{s4-9}
	I_3=\sum_{k\geq k_0}\int_{[0,1]^n}\int_0^1\!f(x)g_k(x+\gamma(t))\tau_{\ell}(t)\,\mathrm{d}t\mathrm{d}x, 
\end{equation}
where
\begin{equation*}
	\widehat{g_{k_0}}(\xi)=\left(f-f*\rho_{\ell''} \right)^{\wedge}(\xi)\textbf{1}_{\{|\xi|<2^{k_0+1}\}}(\xi),
\end{equation*}
and for $k>k_0$
\begin{equation*}
	\widehat{g_{k}}(\xi)=\left(f-f*\rho_{\ell''} \right)^{\wedge}(\xi)\textbf{1}_{\{2^k\leq |\xi|<2^{k+1}\}}(\xi).
\end{equation*}
By the Cauchy-Schwarz inequality and the mean value theorem, we have
\begin{align}
	&\left|\int_{[0,1]^n}\int_0^1\!f(x)g_{k_0}(x+\gamma(t))\tau_{\ell}(t)\,\mathrm{d}t\mathrm{d}x\right|\nonumber\\
	\leq&\|f\|_2\|\left(f-f*\rho_{\ell''} \right)^{\wedge}\|_{L^2(\{|\xi|<2^{k_0+1}\})}\nonumber\\
	\lesssim& 2^{k_0-\ell''}.\label{s4-10}
\end{align}
For any fixed $k>k_0$, the Cauchy-Schwarz inequality and Lemma \ref{lem2} imply that
\begin{equation}\label{s4-11}
	\left|\int_{[0,1]^n}\int_0^1\!f(x)g_{k}(x+\gamma(t))\tau_{\ell}(t)\,\mathrm{d}t\mathrm{d}x\right|=\int\!f(x)T_{\ell}g_k(x)\,\mathrm{d}x\lesssim 2^{\mathfrak{b}\ell-k/d}.
\end{equation}
By \eqref{s4-9}, \eqref{s4-10}, \eqref{s4-11} and choosing proper $k_0$, we get
\begin{equation}\label{s4-15}
	|I_3|\lesssim 2^{k_0-\ell''}+\sum_{k>k_0}2^{\mathfrak{b}\ell-k/d}\lesssim 2^{\mathfrak{b}_1\ell-\mathfrak{b}_2\ell''}.
\end{equation} 

For the term $I_1$, let 
\begin{align*}
	I_1'=&\int_{[0,1]^n}\int_0^1\!f(x)f*\rho_{\ell'}(x)\tau_{\ell}(t)\,\mathrm{d}t\mathrm{d}x \\
	=&\int_{[0,1]^n}\!f(x)f*\rho_{\ell'}(x)\,\mathrm{d}x. 
\end{align*}
Applying Lemma \ref{bou}  gives that 
\begin{equation}\label{s4-16}
	I_1'\geq  c\left(\int_{[0,1]^n}f \mathrm{d}x\right)^2\geq c\epsilon^2.
\end{equation}
And by the mean value theorem, we get
\begin{align}
	|I_1-I_1'|&\leq \left|\int_{[0,1]^n}\int_0^1\!f(x)\left[f*\rho_{\ell'}(x+\gamma(t))-f*\rho_{\ell'}(x)\right]\tau_{\ell}(t)\,\mathrm{d}t\mathrm{d}x\right|\nonumber\\
	&\leq\int_{[0,1]^n}\int_0^1\left(\int_0^1\!\left|\nabla\left(f*\rho_{\ell'}\right)(x+s\gamma(t))\cdot\gamma(t)\right|\,\mathrm{d}s\right)\tau_{\ell}(t)\,\mathrm{d}t\mathrm{d}x\nonumber\\
	&\lesssim_n 2^{\ell'-\ell}.\label{s4-17}
\end{align}

By \eqref{s4-12},\eqref{s4-14},\eqref{s4-15},\eqref{s4-16} and \eqref{s4-17}, we could conclude that 
\begin{equation*}
	2^{\ell}I+\|f*\rho_{\ell''}-f*\rho_{\ell'}\|_2\geq c\epsilon^2,
\end{equation*}
in particular
\begin{equation}
	2^{\ell''}I+\|f*\rho_{\ell''}-f*\rho_{\ell'}\|_2\geq c\epsilon^2
\end{equation}
for $1\ll\Gamma=\Gamma(n)\leq \ell'\ll \ell\ll \ell''$. If we choose an appropriate sequence $\Gamma=\ell_1<\ell_2<\ldots<\ell_k<\ldots$ (independently of $f$) such that for each $k\in\mathbb{N}$ we have $\ell_{k+1}\asymp C^k\log (\epsilon^{-1})$ and that either 
\begin{equation}\label{s4-5}
	I>2^{-\ell_{k+1}-1}c\epsilon^2
\end{equation}
or 
\begin{equation}\label{s4-6}
	\|f*\rho_{\ell_{k+1}}-f*\rho_{\ell_k}\|_2\geq c\epsilon^2/2.
\end{equation}

By using the Plancherel theorem and the fast decay of $\widehat{\rho}$ we have
\begin{equation}\label{s4-8}
	\sum_{k=1}^{\infty}\|f*\rho_{\ell_{k+1}}-f*\rho_{\ell_k}\|_2^2\leq C_\rho.
\end{equation}
We leave its proof in Appendix \ref{app2}.
Thus the case where \eqref{s4-6} holds can only occur finite times, and then \eqref{s4-5} must holds for some $k=k_0$ with $1\leq k_0\leq K:=\lceil8c^{-2}C_\rho \epsilon^{-4}\rceil+1$. Therefore, we have
\[I>2^{-\ell_{k_0+1}-1}c\epsilon^2\geq2^{-\ell_{K+1}-1}c\epsilon^2. \]
Using this and the estimate of $\ell_k$, we can obtain the expression of $\delta(\epsilon)$.

Next we consider the case where the polynomials are linearly dependent. Without loss of generality we may assume that $\{P_i(t)\}_{1\leq i\leq n_0}$ is a basis of $\{P_i(t)\}_{1\leq i\leq n}$ with $1\leq n_0\leq n-1$.	Then there exists a matrix $L\in M^{n_0\times (n-n_0)}$ such that
\begin{equation*}
	(P_{n_0+1}(t),\ldots,P_n(t))=(P_1(t),\ldots,P_{n_0}(t))L.
\end{equation*}
Consider the linear maps
\begin{align*}
	&\mathcal{L}_1:\mathbb{R}^{n_0}\rightarrow\mathbb{R}^n,\, \bar{x}=(x_1,\ldots,x_{n_0})\mapsto x=(\bar{x},\bar{x}L)\\
	&\mathcal{L}_2:\mathbb{R}^n\rightarrow\mathbb{R}^{n_0},\, x=(x_1,\ldots,x_n)\mapsto \bar{x}=(x_1,\ldots,x_{n_0}).
\end{align*}
Note that  $ V=\textrm{image } \mathcal{L}_1$ is a $n_0$-dimensional linear subspace of $\mathbb{R}^n$, the map $\mathcal{L}_1:\mathbb{R}^{n_0}\rightarrow V$
is an isomorphism and $\mathcal{L}_1\circ\mathcal{L}_2\arrowvert_{V}=id_ V$. For $E\subset [0,1]^n$ and $|E|\geq\epsilon$, we claim that there exists a point $x_0\in\mathbb{R}^n$ such that 
\begin{equation*}
	\mathcal{H}^{n_0}((x_0+ V)\cap S)\gtrsim \epsilon,
\end{equation*}
where $\mathcal{H}^{n_0}$ denotes the Hausdorff measure of dimension $n_0$ and the implicit constant only depends on the map $\mathcal{L}_1$.
Then the set $\bar{E}:=\mathcal{L}_2((x_0+ V)\cap E)\subset [0,1]^{n_0}$ has Lebesgue measure $|\bar{E}|\gtrsim\epsilon$. By the previous result, we have that there exist 
\begin{equation*}
	\bar{x},\bar{x}+\overline{\gamma(t)}\in\bar{E}
\end{equation*}
with $t>\delta(\epsilon)$.
Note that $\mathcal{L}_2\arrowvert_{x_0+ V}:x_0+V\rightarrow \mathbb{R}^{n_0}$ is  bijective. There exist $x,y\in (x_0+ V)\cap E$ such that $\bar{x}=\mathcal{L}_2(x)$ and $\bar{x}+\overline{\gamma(t)}=\mathcal{L}_2(y)$. Then $y-x\in V$ and
\begin{equation*}
	y-x=\mathcal{L}_1\circ\mathcal{L}_2(y-x)=\mathcal{L}_1(\overline{\gamma(t)})=\gamma(t).
\end{equation*}
Therefore, we have $x,x+\gamma(t)\in E$ with $t>\delta(\epsilon)$, which completes the proof.

\subsection{Proof of Theorem \ref{thm2}}\label{sub-thm2}
Without loss of generality we may assume that $1\leq d_1<d_2\ldots<d_n=d$ and $N=2^{sd}$ for some $s\in \Gamma(2\mathbb{N}_0)$ with $\Gamma$ a sufficiently large constant depending only on $\gamma$. 

Observe that for each $E\subset [0,2^{sd}]^n$ with density $\epsilon$, there is a rectangle $R$ of size $2^{sd_1}\times\ldots\times2^{sd_n}$ contained in $[0,2^{sd}]^n$ such that the density of $E$ in $R$ is also greater than $\epsilon$, i.e. $|E\cap R|\geq \epsilon|R|=\epsilon 2^{s(d_1+\ldots+d_n)}$. Therefore the problem is reduced to prove that for any $\epsilon>0$, there exist a constant $c>0$ only depending on $\gamma$ and a $\delta=\delta(\epsilon)$  satisfying \eqref{delta}
such that for all $E\subset[0,2^{sd_1}]\times\ldots\times[0,2^{sd_n}]$ with $|E|\geq \epsilon 2^{s(d_1+\ldots+d_n)}$, there exist 
\[x,x+\gamma(t)\in E\]
with $t>\delta 2^s=\delta N^{1/d}$. Indeed, we only need to show that 
\begin{equation}\label{s6-1}
	\int_{[0,2^{sd_1}]\times\ldots\times[0,2^{sd_n}]}\int_0^{2^s}\!f(x)f(x+\gamma(t))\,\mathrm{d}t\mathrm{d}x\geq \delta2^{s(d_1+\ldots+d_n+1)}
\end{equation}
holds for all measurable functions $f$ with $\supp(f)\subset [0,2^{sd_1}]\times\ldots\times[0,2^{sd_n}]$, $0\leq f\leq 1$ and $\int_{\mathbb{R}^n}\!f\,\mathrm{d}x\geq \epsilon2^{s(d_1+\ldots+d_n)}$. 

Recall that  $$\gamma_s(t)=(2^{-sd_1}P_1(2^st),\ldots,2^{-sd_n}P_n(2^st)).$$ By rescaling, to prove \eqref{s6-1} is equivalent to prove that
\begin{equation}\label{s6-2}
	\int_{[0,1]^n}\int_0^1\!f(x)f(x+\gamma_s(t))\,\mathrm{d}t\mathrm{d}x\geq \delta
\end{equation}
holds for all measurable functions $f$ with $\supp(f)\subset [0,1]^n$, $0\leq f\leq 1$ and $\int_{\mathbb{R}^n}\!f\,\mathrm{d}x\geq \epsilon$.  Similar to the proof of Theorem \ref{thm1}, we can prove Theorem \ref{thm2} by following Bourgain's reduction again and using Lemma \ref{lem3}.

\section{Application: the corner-type Roth theorem in $[0,N]^2$}

For arbitrarily fixed $\ell\in \mathbb{Z}$ with $|\ell|>\Gamma$, we denote
\begin{equation}\label{s7-4}
	\widetilde{P}_{j,\ell}(t)=2^{\mathfrak{r}_j\ell}P_j\left(2^{-\ell}t\right) \quad \text{for $j=1,2,$}
\end{equation}
where we define $\mathfrak{r}_j=\sigma_j$ if $\ell>\Gamma$ and $\mathfrak{r}_j=d_j$ if $\ell<-\Gamma$. Note that when $t\asymp 1$ and $\Gamma$ is large, $\widetilde{P}_{1,\ell}(t)$ and $\widetilde{P}_{2,\ell}(t)$ behave like monomials $a_{1,\mathfrak{r}_1}t^{\mathfrak{r}_1}$ and $a_{2,\mathfrak{r}_2}t^{\mathfrak{r}_2}$ respectively.

Let $\zeta$ be a smooth function with compact support in $\mathbb{R}^2\times[1/2,2]$. Consider a bilinear operator (associated with $\widetilde{P}_1$ and $\widetilde{P}_2$)
\begin{equation}
	\widetilde{T}_l(f_1,f_2)(x,y)=\int_\mathbb{R} \! f_1\left(x+\widetilde{P}_{1,\ell}(t),y\right)f_2 \left(x,y+\widetilde{P}_{2,\ell}(t)\right)\zeta(x,y,t)\,\mathrm{d}t.\label{s7-1}
\end{equation}
In \cite[Theorem 1.6]{cg23}, the first author and Guo proved the following decay estimate.
\begin{theorem}\label{thm-cg}
	Let $P_1$, $P_2$ be two linearly independent polynomials with zero constant term denoted by \eqref{polys} respectively. If $\Gamma$ is sufficiently large (depending only on $P_1,P_2$), then there exist constants $\mathfrak{b}\geq0$ and $\sigma>0$ such that for all $|\ell|>\Gamma$ and $\lambda>1$ we have
	\begin{equation}
		\left\|\widetilde{T}_{\ell}(f_1,f_2)\right\|_1\lesssim 2^{\mathfrak{b}|\ell|}\lambda^{-\sigma}\|f_1\|_2\|f_2\|_2 \label{s7-2}
	\end{equation}
	for all functions $f_1$, $f_2$ on $\mathbb{R}^2$ so that $\widehat{f_j}(\xi_1,\xi_2)$ is supported where $|\xi_j|\asymp\lambda$ for at least one index $j=1,2$. Moreover, if we assume $\mathfrak{r}_1\neq \mathfrak{r}_2$ in addition, then $\mathfrak{b}=0$ and $\sigma$ is an absolute constant.
\end{theorem}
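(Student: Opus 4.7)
The plan is to reduce the bilinear $L^1$ decay estimate \eqref{s7-2} to a pointwise bound on a one-dimensional oscillatory integral, and then treat the latter via van der Corput with a case split according to whether $\mathfrak{r}_1=\mathfrak{r}_2$ or not.

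First I would use the compact support of $\zeta$ together with duality to replace the $L^1$ norm by the trilinear form
\begin{equation*}
    \Lambda(f_1,f_2,g)=\iint g(x,y)\,\widetilde{T}_\ell(f_1,f_2)(x,y)\,dx\,dy
\end{equation*}
for $\|g\|_\infty\leq 1$ with $g$ supported in a fixed compact set, and by the symmetric role of $(f_1,P_1)$ and $(f_2,P_2)$ I would assume $\widehat{f_1}$ has Fourier support in $\{|\xi_1|\asymp\lambda\}$. Fourier-expanding $f_1$ and $f_2$ inside $\Lambda$ exhibits $\widetilde{T}_\ell(f_1,f_2)(x,y)$ as a bilinear Fourier multiplier operator with symbol
\begin{equation*}
    m_\ell(\xi_1,\eta_2;x,y)=\int_{\mathbb{R}}e^{2\pi i[\xi_1\widetilde{P}_{1,\ell}(t)+\eta_2\widetilde{P}_{2,\ell}(t)]}\zeta(x,y,t)\,dt.
\end{equation*}
An application of Cauchy--Schwarz in the $\widehat{f_1}$ integration variables together with Plancherel in $f_2$ and $g$, combined with the Fourier support hypothesis $|\xi_1|\asymp\lambda$, would then reduce the bound for $\Lambda$ to the pointwise estimate $|m_\ell(\xi_1,\eta_2;x,y)|\lesssim 2^{\mathfrak{b}|\ell|}\lambda^{-\sigma}$ uniformly in $\eta_2$ and $(x,y)\in\supp\zeta$ for $|\xi_1|\asymp\lambda$.

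The pointwise bound on $m_\ell$ I would obtain from van der Corput applied to the phase $\Phi(t)=\xi_1\widetilde{P}_{1,\ell}(t)+\eta_2\widetilde{P}_{2,\ell}(t)$. In the support of $\zeta$ one has $t\asymp 1$, and from \eqref{s7-4} we see $\widetilde{P}_{j,\ell}(t)=a_{j,\mathfrak{r}_j}t^{\mathfrak{r}_j}+O(2^{-c|\ell|})$ for large $|\ell|$. When $\mathfrak{r}_1\neq\mathfrak{r}_2$ the leading monomials have distinct degrees, so the appropriate derivative of $\Phi$ is bounded below by a constant times $\max(|\xi_1|,|\eta_2|)$, yielding $|m_\ell|\lesssim\lambda^{-\sigma}$ with $\mathfrak{b}=0$ and $\sigma$ absolute. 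When $\mathfrak{r}_1=\mathfrak{r}_2=:\mathfrak{r}$, the leading coefficients cancel along the resonant hyperplane $\xi_1a_{1,\mathfrak{r}}+\eta_2a_{2,\mathfrak{r}}=0$; I would then split $(\xi_1,\eta_2)$-space into a dyadic neighbourhood of this hyperplane and its complement. Off the hyperplane the monomial argument still applies, while on the neighbourhood the linear independence of $P_1,P_2$ forces a next-order combined coefficient to be nonzero, and a higher-order van der Corput using that coefficient---whose relative size is only $2^{-c|\ell|}$ after the rescaling in \eqref{s7-4}---produces the bound with the loss $2^{\mathfrak{b}|\ell|}$.

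The hardest part is the resonant subcase $\mathfrak{r}_1=\mathfrak{r}_2$: one must make the van der Corput analysis uniform across the resonant strip, carefully track the $2^{-c|\ell|}$ rescaling factor coming from the subleading polynomial terms, and sum the near- and off-resonant contributions against the dyadic frequency decomposition adapted to the Fourier supports of $f_1$ and $f_2$. This step is precisely what forces $\mathfrak{b}>0$ in general and explains why $\mathfrak{b}=0$ together with an absolute $\sigma$ is recovered exactly when $\mathfrak{r}_1\neq\mathfrak{r}_2$.
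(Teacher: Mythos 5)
The statement you are proving is not proved in this paper at all: it is quoted verbatim from \cite[Theorem 1.6]{cg23}, so the relevant comparison is with the proof there (which follows the trilinear-smoothing scheme of \cite{cdr21}). Measured against that, your proposal has a fatal gap at the very first reduction. For this corner-type (``twisted'') operator, writing the dual form in frequency variables gives, schematically,
\begin{equation*}
\Lambda=\int_{\mathbb{R}^2}\int_{\mathbb{R}^2}\widehat{f_1}(\xi_1,\xi_2)\,\widehat{f_2}(\eta_1,\eta_2)\,\overline{\widehat{g}}(\xi_1+\eta_1,\xi_2+\eta_2)\,m_\ell(\xi_1,\eta_2)\,\mathrm{d}\xi\,\mathrm{d}\eta ,
\end{equation*}
and a pointwise bound $\|m_\ell\|_\infty\leq A$ does \emph{not} yield $|\Lambda|\lesssim A\|f_1\|_2\|f_2\|_2$ for bounded, compactly supported $g$: once you take absolute values, the remaining convolution-type form $\int|\widehat{f_1}(\xi)||\widehat{f_2}(\eta)||\widehat{g}(\xi+\eta)|\,\mathrm{d}\xi\,\mathrm{d}\eta$ cannot be closed with two $L^2$ norms and $\|g\|_\infty$ (one is forced into $\|\widehat{f_2}\|_1$ or $\|\widehat{g}\|_1$). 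The symbol $m_\ell$ couples the first frequency of $f_1$ with the second frequency of $f_2$, while $\widehat{g}$ entangles all four frequency variables; this is exactly the twisted-paraproduct/triangular-Hilbert-transform obstruction, and it is why the trivial estimate $|\Lambda|\lesssim\|f_1\|_2\|f_2\|_2$ (Cauchy--Schwarz for each fixed $t$) is easy but gaining any power $\lambda^{-\sigma}$ is the whole difficulty. If a pointwise bound on $m_\ell$ sufficed, Theorem \ref{thm-cg} would be a short exercise, whereas the actual argument in \cite{cg23}, following \cite{cdr21}, proceeds by iterated Cauchy--Schwarz (a $TT^*$-type step introducing difference parameters $t,s$), which removes the twist at the cost of producing quadrilinear expressions with phases like $\xi\bigl(\widetilde{P}_{1,\ell}(t)-\widetilde{P}_{1,\ell}(s)\bigr)+\eta\bigl(\widetilde{P}_{2,\ell}(t)-\widetilde{P}_{2,\ell}(s)\bigr)$, and then requires stationary-phase estimates for these two-parameter oscillatory integrals together with sublevel-set/$\sigma$-uniformity input to convert the cancellation into the $\lambda^{-\sigma}$ gain.

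Your second step --- van der Corput for $\Phi(t)=\xi_1\widetilde{P}_{1,\ell}(t)+\eta_2\widetilde{P}_{2,\ell}(t)$ with a case split along the resonant line $a_{1,\mathfrak{r}}\xi_1+a_{2,\mathfrak{r}}\eta_2=0$, tracking the $2^{-c|\ell|}$ size of the subleading coefficients --- is a sensible ingredient and a version of it does occur inside the real proof (it is also the source of the loss $2^{\mathfrak{b}|\ell|}$ and of the improvement when $\mathfrak{r}_1\neq\mathfrak{r}_2$). But it only controls the size of the symbol; without an argument that genuinely exploits cancellation in all variables simultaneously (the iterated Cauchy--Schwarz/box-norm mechanism), the claimed estimate \eqref{s7-2} does not follow. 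As written, the proposal proves a weaker multiplier-type statement and does not reach the bilinear $L^2\times L^2\to L^1$ decay.
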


In the rest of this section, to derive Theorem \ref{thm3}, we will modify the proof of \cite[Section 5]{cg23} by following the strategy used in Subsection \ref{sub-thm2}. 

We can still assume that $N=2^{sd}$ with $s\in \Gamma(2\mathbb{N}_0)$. It suffices to show that for any $\epsilon\in(0,1/2)$, there exist a constant $c$ depending only on $P_1,P_2$ and a $\delta=\delta(\epsilon)$ satisfying \eqref{delta-2} such that 
\begin{align*}
	\int_{[0,2^{sd_1}]\times[0,2^{sd_2}]}\int_0^{2^s}\!\!f(x,y)f(x+P_1(t),y)f(x,y+P_2(t))&\,\mathrm{d}t\mathrm{d}x\mathrm{d}y\\
	&>\delta 2^{s(d_1+d_2+1)}.
\end{align*}
for all measurable functions $f$ on $\mathbb{R}^2$ with $\supp(f)\subset[0,2^{sd_1}]\times[0,2^{sd_2}]$, $0\leq f\leq1$ and $\int_{[0,2^{sd_1}]\times[0,2^{sd_2}]}\!f\,\mathrm{d}x\mathrm{d}y\geq \epsilon$.
By changing variables $x\mapsto 2^{sd_1}x$, $y\mapsto 2^{sd_2}y$ and $t\mapsto 2^s t$, the inequality above can be reduced to prove that for all measurable functions $f$ on $\mathbb{R}^2$ with $\supp(f)\subset[0,1]^2$, $0\leq f\leq1$ and $\int_{[0,1]^2}\!f\,\mathrm{d}x\mathrm{d}y\geq \epsilon$, we have
\begin{equation}
	\int_{[0,1]^3}\!f(x,y)f(x+P_{1,s}(t),y)f(x,y+P_{2,s}(t))\,\mathrm{d}t\mathrm{d}x\mathrm{d}y>\delta,
\end{equation}
where
\begin{equation*}
	P_{j,s}(t)=2^{-sd_j}P_j(2^st)
\end{equation*}
for $j=1,2$.

Let 
\begin{equation*}
	I=\int_{[0,1]^3} \!\!  f(x,y)f\left(x+P_{1,s}(t),y\right)f\left(x,y+P_{2,s}(t)\right) \,\mathrm{d}t\mathrm{d}x\mathrm{d}y.
\end{equation*}
For any $\ell',\ell,\ell''\in\Gamma(\mathbb{N}_0\setminus2\mathbb{N}_0)$ with $\ell'<\ell<\ell''$ we have
\begin{align*}
	2^{\ell}I&\gtrsim_{\tau}
	\int_{[0,1]^3} \!\!  f(x,y)f\left(x+P_{1,s}(t),y\right)f\left(x,y+P_{2,s}(t)\right) \tau_{\ell}(t)\,\mathrm{d}t\mathrm{d}x\mathrm{d}y\\
	&=I_1+I_2+I_3,
\end{align*}
where\footnote{For a function $f$ on $\mathbb{R}^2$ and a function $\phi$ on $\mathbb{R}$, partial convolutions are given by $\phi\!*_1\! f(x,y)=\int_{\mathbb{R}}\! f(x-u,y)\phi(u)\,\textrm{d}u$ and $\phi\!*_2\! f(x,y)=\int_{\mathbb{R}}\! f(x,y-u)\phi(u)\,\textrm{d}u$.}
\begin{align*}
	I_1=\!\!\int_{[0,1]^3}\!\!\!  f(x,y)f\left(x+P_{1,s}(t),y\right)&\rho_{\ell'}\!*_2\!f\left(x,y+P_{2,s}(t)\right)\tau_\ell(t) \mathrm{d}t\mathrm{d}x\mathrm{d}y,\\
	I_2=\!\!\int_{[0,1]^3} \!\!\!  f(x,y)f\left(x+P_{1,s}(t),y\right)&(\rho_{\ell''}\!*_2\!f-\rho_{\ell'}\!*_2\!f)\left(x,y+P_{2,s}(t)\right)\\&\quad\quad\tau_\ell(t)\mathrm{d}t\mathrm{d}x\mathrm{d}y,\\
	I_3=\!\!\int_{[0,1]^3} \!\!\!  f(x,y)f\left(x+P_{1,s}(t),y\right)&(f-\rho_{\ell''}\!*_2\!f)\left(x,y+P_{2,s}(t)\right)\tau_\ell(t) \mathrm{d}t\mathrm{d}x\mathrm{d}y.
\end{align*}

By the Cauchy-Schwarz inequality, it is easy to get
\begin{align*}
	|I_2|\leq\|\rho_{\ell''}\!*_2\!f-\rho_{\ell'}\!*_2\!f\|_2.
\end{align*}

To estimate $I_1$, we set
\begin{equation*}
	I_1':=\int_{[0,1]^2} \!\!  f(x,y)\rho_{\ell'}\!*_2\!f\left(x,y\right)\left(\int _\mathbb{R}\! f\left(x+P_{1,s}(t),y\right)\tau_\ell(t) \,\mathrm{d}t\right)\mathrm{d}x\mathrm{d}y.
\end{equation*}
Then by the mean value theorem, we have
\begin{equation*}
	I_1-I_1'=O_{P_2}\left(2^{\ell'-\ell}\right).
\end{equation*}
Notice that the inner integral
\begin{equation}
	\int _\mathbb{R}\! f\left(x+P_{1,s}(t),y\right)\tau_\ell(t) \,\mathrm{d}t \label{s5-23}
\end{equation}
is in fact over $t\asymp2^{-\ell}$. Since $|\ell-s|\geq \Gamma$, the size of $P_{1,s}(t)$ is dominated by its monomial $a_{1,\mathfrak{r}_1}2^{-sd_1}(2^st)^{\mathfrak{r}_1}$, where, from now on, we define $\mathfrak{r}_j=\sigma_j$ if $s<\ell$ and $\mathfrak{r}_j=d_j$ if $s>\ell$. We use the substitution
\begin{equation*}
	\omega=|P_{1,s}(t)|
\end{equation*}
to rewrite the integral \eqref{s5-23} as a convolution. We may assume that $a_{1,\mathfrak{r}_1}<0$ while the case $a_{1,\mathfrak{r}_1}>0$ is the same up to a reflection. Hence
\begin{equation*}
	\eqref{s5-23}=\widetilde{\tau}\!*_1\!f(x,y),
\end{equation*}
where
\begin{equation*}
	\widetilde{\tau}(\omega)=\tau_\ell(t(\omega))t'(\omega).
\end{equation*}
Let $\varsigma_{\mathfrak{r}_1}^{s,\ell}=|a_{1,\mathfrak{r}_1}|2^{-s(d_1-\mathfrak{r}_1)-\mathfrak{r}_1l}$ and $\rho_{\varsigma_{\mathfrak{r}_1}^{s,\ell}}(x)=(\varsigma_{\mathfrak{r}_1}^{s,\ell})^{-1}\rho((\varsigma_{\mathfrak{r}_1}^{s,\ell})^{-1}x)$. Then
\begin{align*}
	\left\|\widetilde{\tau}\!*_1\!f-\rho_{\varsigma_{\mathfrak{r}_1}^{s,\ell'}}\!*_1\!f\right\|_2
	&\leq \left\|\rho_{\varsigma_{\mathfrak{r}_1}^{s,\ell''}}\!*_1\!f-\rho_{\varsigma_{\mathfrak{r}_1}^{s,\ell'}}\!*_1\!f\right\|_2 \\
	&\quad +\left\|\widetilde{\tau}-\widetilde{\tau}*\rho_{\varsigma_{\mathfrak{r}_1}^{s,\ell''}}\right\|_1+ \left\|\widetilde{\tau}*\rho_{\varsigma_{\mathfrak{r}_1}^{s,\ell'}}-\rho_{\varsigma_{\mathfrak{r}_1}^{s,\ell'}}\right\|_1\\
	&=\left\|\rho_{\varsigma_{\mathfrak{r}_1}^{s,\ell''}}\!*_1\!f-\rho_{\varsigma_{\mathfrak{r}_1}^{s,\ell'}}\!*_1\!f\right\|_2+O\left(2^{\ell-\ell''}\right)+O\left(2^{\ell'-\ell}\right).
\end{align*}
The last two bounds follow from rescaling and the mean value theorem. We thus have
\begin{equation*}
	\left|I_1'-I_1''\right|\leq \left\|\rho_{\varsigma_{\mathfrak{r}_1}^{s,\ell''}}\!*_1\!f-\rho_{\varsigma_{\mathfrak{r}_1}^{s,\ell'}}\!*_1\!f\right\|_2+O\left(2^{\ell-\ell''}\right)+O\left(2^{\ell'-\ell}\right),
\end{equation*}
where
\begin{equation*}
	I_1'':=\int_{[0,1]^2} \!\!  f(x,y)\rho_{\ell'}\!*_2\!f\left(x,y\right)\rho_{\varsigma_{\mathfrak{r}_1}^{s,\ell'}}\!*_1\!f(x,y)\,\mathrm{d}x\mathrm{d}y.
\end{equation*}
By  \cite[Lemma 5.1]{cdr21}, an analogue of Bourgain's \cite[Lemma 6]{bou88},
\begin{equation*}
	I_1''\geq c_{\rho} \left(\int_{[0,1]^2} \! f \right)^3\geq c_{\rho}\epsilon^3.
\end{equation*}

To estimate $I_3$, we consider a dyadic decomposition
\begin{equation*}
	f-\rho_{\ell''}\!*_2 f=S_{\lfloor k_0\rfloor}^{(2)}(f-\rho_{\ell''}\!*_2\!f)+\sum_{k>k_0}\Delta_k^{(2)}(f-\rho_{\ell''}\!*_2\!f)
\end{equation*}
with a parameter $k_0>0$ to be chosen below. Then we write $I_3$ as
\begin{equation*}
	I_3=I_4+\sum_{k>k_0}I_{3,k},
\end{equation*}
where
\begin{align*}
	I_4=\!\!\int_{[0,1]^3} \!\!  f(x,y)f\left(x+P_{1,s}(t),y\right)S_{\lfloor k_0\rfloor}^{(2)}&(f-\rho_{\ell''}\!*_2\!f)\left(x,y+P_{2,s}(t)\right)\\
	&\quad\quad\tau_\ell(t) \mathrm{d}t\mathrm{d}x\mathrm{d}y
\end{align*}
and
\begin{align*}
	I_{3,k}=\!\!\int_{[0,1]^3} \!\!  f(x,y)f\left(x+P_{1,s}(t),y\right)&\Delta_{k}^{(2)}(f-\rho_{\ell''}\!*_2\!f)\left(x,y+P_{2,s}(t)\right)\\&\quad\quad\tau_\ell(t) \mathrm{d}t\mathrm{d}x\mathrm{d}y.
\end{align*}
By the Cauchy-Schwarz inequality and the Plancherel theorem, we have
\begin{equation*}
	|I_4|\leq\|f\|_2\left\|S_{\lfloor k_0\rfloor}^{(2)}(f-\rho_{\ell''}\!*_2\!f)\right\|_2\lesssim 2^{k_0-\ell''}.
\end{equation*}
For each $k>k_0$, let $g_k=\Delta_{k}^{(2)}(f-\rho_{\ell''}\!*_2\!f)$. Then we can rewrite
\begin{equation*}
	I_{3,k}=\int_{[0,1]^3} \!\!  f(x,y)f\left(x+P_{1,s}(2^{-\ell}t),y\right)g_k\left(x,y+P_{2,s}(2^{-\ell}t)\right)\tau(t) \mathrm{d}t\mathrm{d}x\mathrm{d}y.
\end{equation*}
Denote $A_j=2^{(d_j-\mathfrak{r}_j)s+\ell\mathfrak{r}_j}$
and
$\widetilde{P}_j(t)=A_jP_{1,s}(2^{-\ell}t)$ for $j=1,2$.  By rescaling and adding a partition of unity we have
\begin{align*}
	|I_{3,k}|\leq &A_1^{-1}A_2^{-1}\cdot\\
	&\sum_{R\in\mathcal{R}_{s,\ell}}\int\!\left|\int\!\widetilde{f}(x+\widetilde{P}_1(t),y)\widetilde{g}_k(x,y+\widetilde{P}_2(t))\zeta_R(x,y)\tau(t)\,\mathrm{d}t\right|\mathrm{d}x\mathrm{d}y,
\end{align*}
where
\begin{equation*}
	\widetilde{f}(x,y)=f\left(A_1^{-1}x,A_2^{-1}y\right),\quad  \widetilde{g}_k(x,y)=g_k\left(A_1^{-1}x,A_2^{-1}y\right),
\end{equation*}
$\mathcal{R}_{s,\ell}$ is the family of almost disjoint unit squares that form a partition of the set $[0,A_1]\times[0,A_2]$ and, for each $R\in\mathcal{R}_{s,\ell}$, $\zeta_R$ is a nonnegative smooth bump function supported in a neighborhood of $R$ such that $\sum_{R\in\mathcal{R}_{s,\ell}}\zeta_R(x,y)=1$ on $[0,A_1]\times[0,A_2]$.

In fact, $\widetilde{P}_j(t)$ is exactly equal to  $\widetilde{P}_{j,s-\ell}(t)$, where $\widetilde{P}_{j,\ell}$ is defined by \eqref{s7-4} for $j=1,2$. Since $|\ell-s|\geq\Gamma$ is sufficiently large, we can apply Theorem \ref{thm3} (with $\lambda=A_2^{-1}2^{k}$) gives that
\begin{align*}
	|I_{3,k}|&\lesssim A_1^{-1}A_2^{-1}\sum_{R\in\mathcal{R}_{s,\ell}}2^{\mathfrak{b}|s-\ell|}2^{-\sigma k}A_2^{\sigma}\left\|\widetilde{f}\right\|_2\left\|\widetilde{g}_k\right\|_2\\
	&\lesssim 2^{\mathfrak{b}|s-\ell|}2^{-\sigma k}A_1A_2^{1+\sigma}.
\end{align*}
Note that if $s<\ell$, $\mathfrak{r}_j=\sigma_j$ and $2^s\leq 2^\ell$; if $s>\ell$, $\mathfrak{r}_j=d_j$ and $\mathfrak{b}=0$. There exists a uniform constant $\mathfrak{b}'>0$ such that $|I_{3,k}|\lesssim 2^{\mathfrak{b}'\ell-\sigma k}$. 
To sum up, by choosing a proper $k_0$, we thus get
\begin{equation*}
	|I_3|\lesssim 2^{k_0-\ell''}+2^{\mathfrak{b}'\ell-\sigma k_0}\lesssim 2^{\mathfrak{b}_1\ell-\mathfrak{b}_2\ell''}
\end{equation*}
for some fixed constants $\mathfrak{b}_1,\mathfrak{b}_2>0$.

After obtaining the estimates of $I_1$, $I_2$ and $I_3$, the proof follows Bourgain's reduction procedure, as displayed in Section \ref{sub-thm1}. We will omit it.

\section*{acknowledgments}
This project was suppported by the National Key R\&D program of China: No.2022YFA1005700 and the NSF of China under grant No.12371095.  The authors would like to thank the associated editor and anonymous referee for their helpful comments and suggestions.

\section*{Appendix A:  Proof of \eqref{s4-8}}\label{app2}
Observe that 
\begin{align*}
	\sum_{k=0}^\infty\|f*\rho_{\ell_k}-f*\rho_{\ell_{k+1}}\|_2^2&=\sum_{k=0}^{\infty}\int\!|\widehat{f}(\xi)|^2|\widehat{\rho}(2^{-\ell_k}\xi)-\widehat{\rho}(2^{-\ell_{k+1}}\xi)|^2\,\mathrm{d}\xi\\
	&=:J_1+J_2+J_3,
\end{align*}
where
\begin{align*}
	J_1=&\sum_{k=0}^{\infty}
	\int_{|\xi|\leq2^{\ell_k/2}}\!|\widehat{f}(\xi)|^2|\widehat{\rho}(2^{-\ell_k}\xi)-\widehat{\rho}(2^{-\ell_{k+1}}\xi)|^2\,\mathrm{d}\xi,\\
	J_2=&\sum_{k=0}^{\infty}
	\int_{2^{\ell_k/2}<|\xi|<2^{\ell_{k+1}/2}}\!|\widehat{f}(\xi)|^2|\widehat{\rho}(2^{-\ell_k}\xi)-\widehat{\rho}(2^{-\ell_{k+1}}\xi)|^2\,\mathrm{d}\xi,\\
	J_3=&\sum_{k=0}^{\infty}
	\int_{|\xi|>2^{\ell_{k+1}/2}}\!|\widehat{f}(\xi)|^2|\widehat{\rho}(2^{-\ell_k}\xi)-\widehat{\rho}(2^{-\ell_{k+1}}\xi)|^2\,\mathrm{d}\xi.
\end{align*}
For the term $J_1$, by the mean value theorem, we have
\begin{align*}
	|J_1|&=\sum_{k=0}^{\infty}
	\int_{|\xi|\leq2^{\ell_k/2}}\!|\widehat{f}(\xi)|^2\left(\left\|\nabla\widehat{\rho}\right\|_\infty2^{-\ell_k}|\xi|\right)^2\,\mathrm{d}\xi\\
	&\leq \left\|\nabla\widehat{\rho}\right\|_\infty^2\|f\|_2^2\sum_{k=0}^{\infty}2^{-\ell_k}\\
	&\lesssim \|f\|_2^2.
\end{align*}
For the term $J_2$, it is trivial that
\begin{equation*}
	|J_2|\leq \|\widehat{\rho}\|_\infty^2\sum_{k=0}^{\infty}
	\int_{2^{\ell_k/2}<|\xi|<2^{\ell_{k+1}/2}}\!|\widehat{f}(\xi)|^2\,\mathrm{d}\xi\lesssim \|f\|_2^2.
\end{equation*}
For the term $J_3$, the fast decay of $\widehat{\rho}$ and the choice of $\ell_k$ imply that
\begin{align*}
	|J_3|&\lesssim \sum_{k=0}^{\infty}
	\int_{|\xi|>2^{\ell_{k+1}/2}}\!|\widehat{f}(\xi)|^2\frac{1}{(1+2^{-\ell_k}|\xi|)^N}\,\mathrm{d}\xi\\
	&\leq\sum_{k=0}^{\infty}\frac{1}{(1+2^{\ell_{k+1}/2-\ell_k})^N}
	\int_{|\xi|>2^{\ell_{k+1}/2}}\!|\widehat{f}(\xi)|^2\,\mathrm{d}\xi\\
	&\lesssim\|f\|_2^2.
\end{align*}
Thus, we have
\begin{equation*}
	\sum_{k=0}^\infty\|f*\rho_{\ell_k}-f*\rho_{\ell_{k+1}}\|_2^2\leq C_{\rho}\|f\|_2^2.
\end{equation*}

\end{document}